\newtheorem{theorem}{Theorem}
\newtheorem{lemma}{Lemma}
\newtheorem{corollary}{Corollary}
\newtheorem{definition}{Definition}
\newcommand{\Z}{\mathbb{Z}}
\newcommand{\D}{$\mathbb{D}$}
\newcommand{\DD}{\mathbb{D}}
\newcommand{\N}{$\mathcal{N}$}
\newenvironment{proof1}{\paragraph{Proof of Theorem \ref{teorema1}}}{\hfill$\square$}
\newenvironment{proof2}{\paragraph{Proof of Theorem \ref{teorema2}}}{\hfill$\square$}
\newenvironment{proof3}{\paragraph{Proof of Theorem \ref{teorema3}}}{\hfill$\square$}
\newenvironment{proof4}{\paragraph{Proof of Theorem \ref{teorema4}}}{\hfill$\square$}
\newenvironment{proofc5}{\paragraph{Proof of Corollary 2}}{\hfill$\square$}
\title{Invertibility threshold for Nevanlinna quotient algebras}
\author{ Artur Nicolau and Pascal J. Thomas }
\address{A. Nicolau: Universitat Aut\`onoma de Barcelona\\
Departament de Matem\`a\-tiques\\
 08193-Barcelona\\ Catalonia}
\email{artur@mat.uab.cat}
\address{P. Thomas:  
Institut de Mathï¿½matiques de Toulouse ; UMR5219 \\
Universit\'e de Toulouse ; CNRS \\
UPS IMT, F-31062 Toulouse Cedex 9 \\
France}
 \email{pascal.thomas@math.univ-toulouse.fr}
\subjclass[2000]{30H15,30H80,30J10}
\thanks{First author is supported by the Generalitat de Catalunya (grant 2017 SGR 395) and the Spanish Ministerio de Ciencia e Innovaci\'on (project MTM2017-85666-P.)}
\begin{document}

\begin{abstract} 
Let $\mathcal{N}$ be the Nevanlinna class and let $B$ be a Blaschke product. It is shown that the natural invertibility criterion in the quotient algebra  $\mathcal{N} / B \mathcal{N}$, that is, $|f| \ge e^{-H} $ on the 
set $B^{-1}\{0\}$ for some positive harmonic function $H$, holds if and only if the function $- \log |B|$ has a harmonic majorant on 
 the set $\{z\in\mathbb{D}:\rho(z,\Lambda)\geq e^{-H(z)}\}$; at least
 for large enough functions $H$. We also study the corresponding class of positive harmonic functions $H$ in the unit disc such that the latter
 condition holds. 
 We also discuss the analogous invertibility problem in quotients of the Smirnov class. 
\end{abstract}

\maketitle



\section{Introduction}
The Nevanlinna class $\mathcal{N}$ is the algebra of analytic functions $f$ in \D,
the unit disc of the complex plane,  such that $\log|f|$ has a positive harmonic majorant in \D. 
Any $f\in \mathcal{N}$ factors as $f=B F$, where $B$ is a Blaschke product and $F\in \mathcal{N}$  is 
 invertible, as are all zero-free functions in the Nevanlinna class. Any principal ideal in $\mathcal{N}$ is thus of the form $B\mathcal{N}$, and if the zero set of $B$
is $\Lambda=\{\lambda_k\}$, this ideal is the set of functions in $\mathcal{N}$ which vanish at $\Lambda$.
Fix a Blaschke product $B$ and $f\in \mathcal{N}$. It is clear that  the class $[f]=\{f+Bh:h\in\mathcal{N}\}$ is uniquely determined by the
restriction of $f$ to $\Lambda$. If $[f]$ is
invertible in the quotient algebra $\mathcal{N} \big/ B\mathcal{N}$, then there exists a positive harmonic
function
$H$ such that $|f(\lambda_k)|\geq e^{-H(\lambda_k)}$, $k=1,2,\dots$ However, the converse is not true in general. 
For a given Blaschke product $B$, we would like to find out which positive harmonic functions $H$
will make the converse true.

The analogous problem for the algebra $H^\infty$  of bounded analytic functions $f$ in the unit disc,
with the obvious necessary condition for invertibility
$|f(\lambda_k)|\geq \varepsilon >0$,
was studied in \cite{GMN} in connection with the Corona Theorem and interpolating sequences.  We first need to give some background on the classical $H^\infty$ theory. Recall that $H^\infty$ is endowed with the norm $||f||_\infty:=\sup\{|f(z)|:z\in$ \D $\}$.

We will use the standard \emph{pseudohyperbolic distance} on $\mathbb{D}$ given by 
\[ \rho(a_1 ,  a_2 ) := |a_1 - a_2| |1 - \overline{a_2} a_1|^{-1} ,
\]
for $a_1, a_2 \in \DD$.  

We now recall two classical results of Carleson. A sequence of points $\Lambda=\{\lambda_k\}$ in $\mathbb{D}$ is called an 
\emph{interpolating sequence} (for $H^\infty$) if for any bounded sequence of complex values $\{w_k\}$ there exists $f\in H^\infty$ such that $f(\lambda_k)=w_k$, $k=1,2,\ldots$. A celebrated result of Carleson states that $\Lambda=\{\lambda_k\}$ is interpolating if and only if there exists a constant $\delta >0$ such that
\begin{equation*}
    (1-|\lambda_k|^2)|B'(\lambda_k)|\geq\delta,\hspace{1cm} k=1,2,\ldots,
\end{equation*}
where $B$ is the Blaschke product with zeros $\{\lambda_k\}$. Observe that 
$$(1-|\lambda_k|^2)|B'(\lambda_k)| =  \prod_{j:j\neq k} \rho(\lambda_k,\lambda_j) . $$
The classical Corona Theorem states that the ideal generated by the functions $f_1,\ldots,f_n\in H^\infty$ is the whole algebra $H^\infty$ if and only if $\inf\{|f_1(z)|+\cdots+|f_n(z)|:z\in$ \D $\}>0$. See \cite{Ca}, \cite{Ca2}, or Chapters VII and VIII of \cite{Ga}.

 A function $I\in H^\infty$ is called \emph{inner} if 
$\left|\lim_{r\to1} I(r\xi)\right|=1$ for almost every $\xi$ in the unit circle $\partial$\D. 
Any inner function $I$ factors as $I=B S$, where $B$ is a Blaschke product and $S$ is an inner function without zeros. It follows from the classical Theorem of Beurling on the invariant subspaces of the shift operator, that any weak* closed ideal in $H^\infty $ is of the form $I H^\infty = \{Ih : h \in H^\infty \}$ for some inner function $I$. See \cite{Ga}, page 82. Fix an inner function $I$ and consider the quotient Banach algebra $H^\infty \big/ IH^\infty$ with the norm 
\begin{equation*}
||[f]||_{H^\infty \big/ IH^\infty}=\inf\{||f+Ih||_\infty:h\in H^\infty\},\hspace{0.5cm}f\in H^\infty.
\end{equation*}
Let $\Lambda=\{\lambda_k\}$ be the zero set of $I$. It is clear that if $[f]$ is invertible in $H^\infty \big/ IH^\infty$, then $\inf|f(\lambda_k)|>0$. This condition is not always sufficient as one can observe considering the extreme situation where $I$ is zero-free. 

Let $\delta=\delta(I)$ be the infimum of the positive numbers $\gamma>0$ such that if $f\in H^\infty$, $||f||_\infty\leq1$ satisfies $\inf_k|f(\lambda_k)|\geq\gamma$, then $[f]$ is invertible in $H^\infty \big/ IH^\infty$, or equivalently, there exist $g,h\in H^\infty$ such that $fg=1+Ih$. Hence, if $\gamma>\delta(I)$, for any $f\in H^\infty$, $||f||_\infty\leq1$ with $\inf_k|f(\lambda_k)|\geq\gamma$, we have that $[f]$ is invertible in $H^\infty \big/ IH^\infty$; while if $0<\gamma<\delta(I)$ there exists $f\in H^\infty$, $||f||_\infty\leq1$ with $\inf_k|f(\lambda_k)|\geq\gamma$ such that $[f]$ is not invertible in $H^\infty \big/ IH^\infty$. 

Gorkin, Mortini and Nikolski proved in \cite{GMN} that $\delta(I)=0$ if and only if $I$ satisfies that, for any $\varepsilon>0$, we have
\begin{equation}
    \inf\{|I(z)|:\rho(z, \Lambda )>\varepsilon\} >0 .
\label{eq1}
\end{equation}

If $I$ is a Blaschke product whose zeros $\Lambda$ are a finite union of interpolating sequences (or, equivalently, if $\sum_k(1-|\lambda_k|)\delta(\lambda_k)$ is a Carleson measure), then condition (\ref{eq1}) is satisfied. Here $\delta(\lambda_k)$ denotes the Dirac mass at the point $\lambda_k$. However, there are Blaschke products $I$ satisfying (\ref{eq1}) whose zeros are not a finite union of interpolating sequences. See \cite{GMN} and \cite{Bo}. For this reason, the authors of \cite{GMN} called property (\ref{eq1}) the \emph{Weak Embedding Property (WEP)}. It would be interesting to describe the Blaschke products $I$ satisfying the WEP in terms of the location of their zeros. Some further results and questions on inner functions satisfying the WEP can be found in \cite{BNT}.

The study of the invertibility in $H^\infty \big/ IH^\infty$ was continued by Nikolskii and Vasyunin in \cite{NV}, where it was proved that for any $0<\delta<1$, there exists a Blaschke product $I$ such that $\delta(I)=\delta$. 
In other words, one can find an invertibility threshold at any level, by choosing
the Blaschke product appropriately.  The main purpose of this paper is to discuss the analogous problem in the Nevanlinna class.

We now turn to the analogues in the Nevanlinna class of the above results. 
In many of those, positive harmonic functions  will play the role that was played by positive constants
in the $H^\infty$ setting. We begin with interpolating sequences.

Let Har$_+($\D$)$ denote the cone of positive harmonic functions in \D. Given a sequence of points $\Lambda=\{\lambda_k\}\subset$ \D, let $W(\Lambda)$ be the set of sequences of complex sumbers $\{w_k\}$ such that there exists $H\in$ Har$_+($\D$)$ with $\log^+|w_k|\leq H(\lambda_k)$, $k=1,2,\ldots$. Observe that $\{f(\lambda_k)\}\subset W(\Lambda)$ for any $f\in \mathcal{N}$. A sequence of points $\Lambda=\{\lambda_k\}\subset$ $\mathbb{D}$ is called an \emph{interpolating sequence} for $\mathcal{N}$ if for any sequence of values $\{w_k\}\subset W(\Lambda)$ there exists $f\in\mathcal{N}$ such that $f(\lambda_k)=w_k,$ $k=1,2,\dots$ It was proved in \cite{HMNT} that $\Lambda=\{w_k\}$ is an interpolating sequence for $\mathcal{N}$ if and only if there exists $H\in$ Har$_+$(\D) such that
\begin{equation}
\label{NIS}
    (1-|\lambda_k|^2)|B'(\lambda_k)|\geq e^{-H(\lambda_k)},\hspace{0.5cm} k=1,2,\ldots,
\end{equation}
where $B$ is the Blaschke product with zeros $\{\lambda_k\}$. 

Using a result of T. Wolff, Mortini proved the following version of the Corona Theorem for $\mathcal{N}$. Let $f_1,\ldots,f_n\in$ \N. Then there exist $g_1,\ldots,g_n\in\mathcal{N}$ such that $f_1g_1+\cdots+f_ng_n=1$ if and only if the function $-\log(|f_1|+\cdots+|f_n|)$ has a harmonic majorant in $\mathbb{D}$. See \cite{M} or \cite{Ma}. 

The analogue of the WEP in the Nevanlinna class was introduced in \cite{MNT} where it was proved that invertible classes $[f]$ in $\mathcal{N} \big/ B\mathcal{N}$ are precisely the classes for which there exists $H=H(f)\in$ Har$_+(\mathbb{D})$ such that $|f(\lambda_k)|\geq e^{-H(\lambda_k)}$, $k=1,2,\ldots$ if and only if $B$ satisfies the following analogue of the WEP: for any $H_1\in$ Har$_+(\mathbb{D})$ there exists $H_2\in$ Har$_+(\mathbb{D})$ such that
\begin{equation*}
    |B(z)|\geq e^{-H_2(z)}\hspace{0.5cm}\text{ if }\hspace{0.5cm}\rho(z, \Lambda )\geq e^{-H_1(z)}.
\end{equation*}
In contrast with the situation in $H^\infty$, the main result in \cite{MNT} states that this property holds if and only if the zeros of $B$ are a finite union of interpolating sequences in the Nevanlinna class.\\

As we said, the main purpose of the present paper 
is to discuss an analogue in the Nevanlinna class of the result of Nikolski 
and Vasyunin (\cite{NV}) described above. Let $B$ be the Blaschke product with zero set $\Lambda=\{\lambda_k\}$. Fix $f\in\mathcal{N}$. As mentioned above, a necessary condition for the class $[f]$ to be invertible in $\mathcal{N} \big/ B\mathcal{N}$ is that there exists $H\in$ Har$_+($\D$)$ such that 
\begin{equation}
    |f(\lambda_k)|\geq e^{-H(\lambda_k)}, \hspace{0.5cm} k=1,2,\ldots.
    \label{eq2}
\end{equation}
In analogy with the definition of $\delta(B)$ in the context of $H^\infty$, we are interested in which 
functions $H\in$ Har$_+($\D$)$ have the property that (\ref{eq2}) guarantees
 that the class $[f]$ is invertible in $\mathcal{N} \big/ B\mathcal{N}$. Since functions $f\in\mathcal{N}$ without zeros are invertible in $\mathcal{N}$, we can always choose a representative of any class
 $[f]$ as
 $f\in H^\infty$ with $||f||_{\infty}\leq 1$ and 
 assume that $H$ is bigger than any prescribed function in Har$_+(\mathbb{D})$. Our first result says that this invertibility problem is equivalent to the existence of a  harmonic majorant
 of $-\log |B|$ restricted to a certain subset of $\DD$. 

\begin{definition}
A function $F\colon \mathbb{D}\longrightarrow[0,+\infty)$ has a \emph{harmonic majorant} on the set $E\subset\mathbb{D}$ if there exists $H\in$ Har$_+($\D$)$ such that $F(z)\leq H(z)$ for any $z\in E$. 
\end{definition}

We will need an auxiliary function associated to any Blaschke sequence. 

\begin{definition}
Given a Blaschke sequence $\Lambda=\{\lambda_k\}$, let $H_\Lambda$ denote the positive harmonic function defined by
\begin{equation}
\label{funcio}
H_\Lambda(z)=\sum_k\int_{I_k}\frac{1-|z|^2}{|\xi-z|^2}|\mathrm{d}\,\xi|,\hspace{0.5cm}z\in\mathbb{D},
\end{equation}
where $I_k:=\{\xi\in\partial\mathbb{D}:|\xi-\lambda_k / |\lambda_k||\leq 1-|\lambda_k|\}$ denotes the Privalov 
shadow of $\lambda_k$.
\end{definition}

\begin{theorem}
\label{teorema1}
Let $B$ be a Blaschke product with zero set $\Lambda=\{\lambda_k\}$.
    \begin{enumerate}[(a)]
        \item There exists a universal constant $C>0$ such that the following statement holds. Let $H\in$ Har$_+($\D$)$ and assume that the function $-\log|B|$ has a harmonic majorant on the set $\{z\in\mathbb{D}:\rho(z,\Lambda)\geq e^{-H(z)}\}$. Then for any $f\in H^\infty$, $||f||_\infty\leq1$ such that
            \begin{equation}
\label{parta}
             |f(\lambda_k)|> e^{-CH(\lambda_k)}, \quad k=1,2, \ldots , 
            \end{equation}
        there exist $g,h\in\mathcal{N}$ such that $fg=1+Bh$.
        \item There exist universal constants  $C_0>0$ and $C >0$ such that the following statement holds. Let $H\in$ Har$_+($\D$)$ with $H\geq C_0 H_\Lambda$. Assume that for any $f\in H^\infty$, $||f||_\infty\leq1$ such that
            \begin{equation*}
\label{partb}                
 |f(\lambda_k)|> e^{-CH(\lambda_k)}, \quad k=1,2, \ldots , 
            \end{equation*}
        there exist $g,h\in\mathcal{N}$ such that $fg=1+Bh$. Then, the function $-\log|B|$ has a harmonic majorant on the set $\{z\in\mathbb{D}\, : \rho(z,\Lambda)\geq e^{-H(z)}\}$.
    \end{enumerate}
\end{theorem}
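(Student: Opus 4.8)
Both statements are naturally attacked through the Corona theorem for $\mathcal{N}$ (Mortini, using Wolff's lemma; see \cite{M}) quoted above: applied to $f_1=f$, $f_2=B$, it says that there exist $g,h\in\mathcal{N}$ with $fg=1+Bh$ if and only if $z\mapsto-\log(|f(z)|+|B(z)|)$ admits a harmonic majorant in $\DD$ (here $|f(\lambda_k)|>0$ by \eqref{parta}, so $f$ and $B$ share no zero). Thus part (a) asks us to manufacture such a majorant out of the harmonic majorant of $-\log|B|$ on $E:=\{z\in\DD:\rho(z,\Lambda)\ge e^{-H(z)}\}$ together with \eqref{parta}, while part (b) asks for the converse under the extra size assumption $H\ge C_0H_\Lambda$.

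For part (a) I would argue pointwise. Fix $G\in\mathrm{Har}_+(\DD)$ with $-\log|B|\le G$ on $E$. On $E$ itself $-\log(|f|+|B|)\le-\log|B|\le G$. On $\DD\setminus E$ choose $\lambda_k$ realising $\rho(z,\Lambda)=\rho(z,\lambda_k)<e^{-H(z)}$; Schwarz--Pick for $\|f\|_\infty\le 1$ gives $|f(z)-f(\lambda_k)|\le 2\rho(z,\lambda_k)<2e^{-H(z)}$, and Harnack's inequality on the pseudohyperbolic disc centred at $z$ through $\lambda_k$ gives $H(\lambda_k)\le 3H(z)$ as soon as $\rho(z,\lambda_k)\le 1/2$; combined with \eqref{parta} for $C$ a small enough universal constant this yields $|f(z)|\ge\tfrac12 e^{-C'H(z)}$, hence $-\log(|f|+|B|)\le C'H(z)+\log 2$. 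Then $G+C'H+\log 2\in\mathrm{Har}_+(\DD)$ is the required majorant. The one genuinely delicate spot is the set where $H(z)$ is too small for the Schwarz--Pick/Harnack comparison to close; I would handle those points either by showing $-\log|B|$ still has a harmonic majorant on a mild enlargement of $E$ containing them, or by exploiting that $H$ is bounded there and bounding $|B|$ directly. This is the main nuisance of part (a).

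For part (b) I would argue by contraposition: assuming $-\log|B|$ has no harmonic majorant on $E$, build $f\in H^\infty$, $\|f\|_\infty\le 1$, satisfying \eqref{parta} whose class is not invertible. A duality/extraction argument (using that $-\log|B|$ and every $H'\in\mathrm{Har}_+(\DD)$ are finite and continuous on $E$, which misses $\Lambda$) produces a pseudohyperbolically separated sequence $\{z_n\}\subset E$ with $1-|z_n|\downarrow 0$, together with positive weights $a_n$, so that $\xi\mapsto\sum_n a_nP(z_n,\xi)$ is bounded on $\partial\DD$ whereas $\sum_n a_n\bigl(-\log|B(z_n)|\bigr)=\infty$; the former forces $\sum_n a_nH'(z_n)<\infty$ for every $H'\in\mathrm{Har}_+(\DD)$, so no positive harmonic function can dominate $-\log|B|$ along $\{z_n\}$. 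Let $f$ be the Blaschke product with zero set $\{z_n\}$ (a Blaschke sequence once $1-|z_n|\downarrow 0$ fast enough); then $\|f\|_\infty\le 1$, and since $f(z_n)=0$ we get $-\log(|f(z_n)|+|B(z_n)|)=-\log|B(z_n)|$, so by the Corona theorem an invertible $[f]$ would supply a positive harmonic majorant of $-\log|B|$ along $\{z_n\}$, a contradiction. It then remains to check \eqref{parta}, i.e.\ $\sum_n-\log\rho(z_n,\lambda_k)<CH(\lambda_k)$ for all $k$: by separation at most one $z_n$ lies within pseudohyperbolic distance $1/2$ of $\lambda_k$, and there $-\log\rho(z_n,\lambda_k)\le H(z_n)\le 3H(\lambda_k)$ (using $z_n\in E$ and Harnack); for the remaining $z_n$ one has $-\log\rho(z_n,\lambda_k)\lesssim(1-|z_n|)(1-|\lambda_k|)\,|1-\overline{z_n}\lambda_k|^{-2}$, and the crucial observation is that a point $z_n\in E$ at which $|B(z_n)|$ is tiny must lie in a part of the disc heavily shadowed by $\Lambda$ --- being pseudohyperbolically $e^{-H(z_n)}$-far from every individual $\lambda_j$, only a dense cluster of zeros can make $|B(z_n)|$ small --- so the measure $\sum_n(1-|z_n|)\delta_{z_n/|z_n|}$ is, arc by arc, dominated up to a universal constant by $\sum_j\mathbf{1}_{I_j}\,|\mathrm{d}\xi|$; hence $\sum_n-\log\rho(z_n,\lambda_k)\lesssim H_\Lambda(\lambda_k)\le C_0^{-1}H(\lambda_k)$, and \eqref{parta} follows for a suitable universal $C$.

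I expect this last verification in part (b) to be the crux of the whole argument --- reconciling ``$|B(z_n)|$ small, with $\{z_n\}$ still spread out enough to defeat every harmonic majorant'' against ``$|f(\lambda_k)|>e^{-CH(\lambda_k)}$'' --- and it is precisely in comparing the boundary shadows of $\{z_n\}$ with the shadows $I_j$ of $\Lambda$ that the hypothesis $H\ge C_0H_\Lambda$ becomes indispensable; the small-$H(z)$ difficulty in part (a) is a lesser, but genuine, technical point.
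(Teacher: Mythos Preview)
Your part (a) is essentially the paper's argument. The ``delicate spot'' you worry about dissolves once you handle $\{z:\rho(z,\Lambda)>1/2\}$ separately via Lemma~\ref{lema1} (your second suggested fix): on that set $-\log|B|\le C H_\Lambda$ outright, and on its complement the nearest $\lambda_k$ is automatically within pseudohyperbolic distance $1/2$, so Harnack applies with a uniform constant and the Schwarz--Pick estimate closes.

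For part (b) the paper takes a genuinely different and more direct route, and your contrapositive scheme has real gaps. The paper does \emph{not} argue by contradiction or duality: it builds a single explicit test function and applies the hypothesis once. For each dyadic Whitney square $Q$ meeting $\{z:\rho(z,\Lambda)\le 1/2\}$ one picks $a(Q)\in\overline{Q}\setminus E$ maximizing $-\log|B|$ there; the assumption $H\ge C_0 H_\Lambda$ is used precisely to guarantee, by an area count, that $\overline{Q}\setminus E\neq\emptyset$. Because each $a(Q)$ sits in a Whitney square adjacent to $\Lambda$, Corollary~\ref{corolari1} simultaneously makes $A=\{a(Q)\}$ a Blaschke sequence and gives $-\log|B_A(\lambda_k)|\lesssim H_\Lambda(\lambda_k)+H(\lambda_k)\lesssim H(\lambda_k)$, i.e.\ \eqref{parta} for $f=B_A$. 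The hypothesis then yields $B_A g+Bh=1$, hence a harmonic bound on $-\log|B(a(Q))|$; since $a(Q)$ was the maximizer on $\overline{Q}\setminus E$, Harnack transports this to all of $\bigcup_Q(Q\setminus E)$, and Lemma~\ref{lema1} covers the remaining points.

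In your approach, by contrast, the extraction of a separated \emph{Blaschke} sequence $\{z_n\}\subset E$ from the failure of a harmonic majorant is not carried out (the duality produces weighted finite configurations, and passing to one infinite separated sequence with $\sum(1-|z_n|)<\infty$ needs justification), and your ``crucial observation'' that such $z_n$ must lie in the shadow of $\Lambda$ is only a heuristic: the weights $a_n$ from the duality play no role in the Blaschke product $f$, and nothing in your extraction forces $\rho(z_n,\Lambda)\le\gamma<1$, which is exactly the hypothesis of Corollary~\ref{corolari1} needed for the far-term bound $\sum_n-\log\rho(z_n,\lambda_k)\lesssim H_\Lambda(\lambda_k)$. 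The repair is to observe first (via Lemma~\ref{lema1}) that any failure of a harmonic majorant on $E$ already occurs on $E\cap\{\rho(\cdot,\Lambda)\le 1/2\}$; once you restrict there, the natural choice of one witness per Whitney square is both the cleanest extraction and the one that makes the shadow comparison immediate --- at which point you have recovered the paper's construction.
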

In Corollary \ref{corona}, after the proof of this theorem in Section \ref{pf12},
we show how the result can be extended to B\'ezout equations with any
number of generators.

Observe that Theorem \ref{teorema1} is analogous to the equivalence of (a) and (b) in \cite[Theorem A]{MNT}. 
Hence, given a Blaschke product $B$
 with zero set $\Lambda$, and for large enough positive harmonic functions $H$,
 the invertibility problem in the quotient algebra 
 $\mathcal{N } / B \mathcal{N}$
 can be reduced to the study of the following class.
 
\begin{definition}
Given a Blaschke product $B$, let $\mathcal{H}(B)$ be the set of functions $H\in$ Har$_+(\mathbb{D})$ such that $-\log|B|$ has a harmonic majorant on the set $\{z\in\mathbb{D}\,:\rho(z,\Lambda)\geq e^{-H(z)}\}$. 
 \end{definition}
 
  It is easy to see that constant functions are always in $\mathcal{H}(B)$ (see Proposition 4.1 of \cite{HMNT} or Lemma \ref{lema1} below), and that if $H_1\in \mathcal{H}(B)$ and $H_2 \in \mbox{Har}_+(\mathbb{D})$,
 $H_2 \le H_1$, then $H_2\in \mathcal{H}(B)$. In this language the main result of \cite{MNT} reads as follows: $\mathcal{H}(B)=$ Har$_+(\mathbb{D})$ if and only if $\Lambda$ is a finite union of interpolating sequences for $\mathcal{N}$.

 Our next result says that for any Blaschke product $B$, $\mathcal{H}(B)$ does contain unbounded functions. 
 \begin{theorem} 
 \label{teorema2}
 Let $B$ be a Blaschke product with zero set $\Lambda=\{\lambda_k\}$.Then,
\begin{enumerate}[(a)]
        \item There exists a function $H\in$ Har$_+(\mathbb{D})$ with $\limsup_{k\to\infty} H(\lambda_k)=+\infty$, such that $-\log|B|$ has a harmonic majorant on the set $\{z\in\mathbb{D}\,:\rho(z,\Lambda)\geq e^{-H(z)}\}$.
        \item There exists a function $H\in$ Har$_+(\mathbb{D})$ with $\limsup_{k\to\infty} H(\lambda_k)=+\infty$ such that if $f\in H^\infty$, $||f||_\infty\leq1$ satisfies $|f(\lambda_k)|\geq e^{-H(\lambda_k)}$, $k=1,2,\ldots$, then there exist $g,h\in\mathcal{N}$ such that $fg=1+Bh$.
\end{enumerate}
 \end{theorem}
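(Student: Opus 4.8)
Part (b) will be a formal consequence of part (a) together with Theorem \ref{teorema1}(a), so the real work lies in (a). To see the reduction, let $H_0\in\mathrm{Har}_+(\mathbb D)$ be the function furnished by part (a): thus $-\log|B|$ has a harmonic majorant on $\{z:\rho(z,\Lambda)\ge e^{-H_0(z)}\}$ and $\limsup_k H_0(\lambda_k)=+\infty$. Let $C$ be the universal constant of Theorem \ref{teorema1}(a) and put $H:=\tfrac12 C\,H_0$, again a positive harmonic function. Since $H_0>0$ on $\mathbb D$ we have $H(\lambda_k)<C\,H_0(\lambda_k)$ for every $k$, so if $f\in H^\infty$, $\|f\|_\infty\le1$, satisfies $|f(\lambda_k)|\ge e^{-H(\lambda_k)}$ then a fortiori $|f(\lambda_k)|>e^{-CH_0(\lambda_k)}$; Theorem \ref{teorema1}(a) applied with $H_0$ then produces $g,h\in\mathcal N$ with $fg=1+Bh$. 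As $\limsup_k H(\lambda_k)=\tfrac12 C\limsup_k H_0(\lambda_k)=+\infty$, this is exactly assertion (b). So from now on we only need (a).

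\textbf{Proof of (a): the construction.} The departure point is the estimate underlying Lemma \ref{lema1}: since $-\log t\le \phi(\delta)(1-t^2)$ for $t\in[\delta,1)$ (with $\phi(\delta)=-\log\delta/(1-\delta^2)$) and $1-\rho(z,\lambda_k)^2\le C\int_{I_k}\frac{1-|z|^2}{|\xi-z|^2}|\mathrm d\xi|$, one gets
\[
-\log|B(z)|=\sum_k-\log\rho(z,\lambda_k)\ \le\ C\phi(\delta)\,H_\Lambda(z)\qquad\text{whenever }\rho(z,\Lambda)\ge\delta ,
\]
so every constant lies in $\mathcal H(B)$ with majorant a fixed multiple of $H_\Lambda$. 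To manufacture an \emph{unbounded} element of $\mathcal H(B)$, I would keep a majorant of essentially this shape while enlarging the admissible set $\{\rho(z,\Lambda)\ge e^{-1}\}$ only near a very thin subsequence of $\Lambda$. Concretely: decompose $\Lambda$ into ``clusters'' by a stopping-time (Whitney-type) argument, pass to a subsequence of clusters sitting in pairwise well-separated Carleson boxes $Q_n$ whose sizes $|Q_n|$ decay as fast as we wish, pick a zero $\lambda_{k_n}$ in the $n$-th retained cluster, and let $\nu_n$ be the number of zeros of $\Lambda$ within pseudohyperbolic distance $1/2$ of $\lambda_{k_n}$. Because the $1/2$-neighbourhoods of the retained clusters are essentially disjoint, $\sum_n\nu_n(1-|\lambda_{k_n}|)<\infty$, so we may fix weights $M_n\to+\infty$ growing slowly enough that both $\sum_n M_n(1-|\lambda_{k_n}|)$ and $\sum_n\nu_n M_n(1-|\lambda_{k_n}|)$ are finite, and set
\[
H:=1+\sum_n M_n\!\int_{I_{k_n}}\!\frac{1-|z|^2}{|\xi-z|^2}|\mathrm d\xi|,\qquad
G:=C_1 H_\Lambda+C_1\sum_n \nu_n M_n\!\int_{I_{k_n}}\!\frac{1-|z|^2}{|\xi-z|^2}|\mathrm d\xi| ,
\]
with $C_1$ a large absolute constant. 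By the summability both $H$ and $G$ are positive harmonic, and $H(\lambda_{k_n})\gtrsim M_n\to+\infty$, so $\limsup_k H(\lambda_k)=+\infty$.

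\textbf{Verification.} One then checks that $G$ majorises $-\log|B|$ on $E_H:=\{z:\rho(z,\Lambda)\ge e^{-H(z)}\}$. Fix $z\in E_H$ and split $\sum_k-\log\rho(z,\lambda_k)$ according to whether $\rho(z,\lambda_k)\ge1/2$ or not. The first group contributes $\le C\phi(1/2)H_\Lambda(z)\le\tfrac12 G(z)$ by the estimate above. For the second group, $\rho(z,\lambda_k)\ge e^{-H(z)}$ forces $-\log\rho(z,\lambda_k)\le H(z)$, while the number of such $k$ is at most $\#\{j:\lambda_j\in B_\rho(z,1/2)\}\le C\,H_\Lambda(z)$ (each such $\lambda_j$ contributes a definite amount to $H_\Lambda(z)$), so this group contributes $\le C\,H_\Lambda(z)\,H(z)$. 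Expanding $H$ term by term, the remaining inequality $C\,H_\Lambda(z)H(z)\le\tfrac12 G(z)$ reduces to: $H$ is bounded away from all the ``tents'' over the arcs $I_{k_n}$, and on the tent over $I_{k_n}$ one has $H(z)\lesssim M_n$ and $H_\Lambda(z)$ comparable to $1+\nu_n$ (its order of magnitude near the $n$-th cluster). Granting this, $-\log|B(z)|\le G(z)$ on $E_H$, i.e. $H\in\mathcal H(B)$, and (a) follows.

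\textbf{Main obstacle.} The genuinely delicate point is exactly the last sentence of the verification. One must interleave three choices — the thinning of the clusters (hence the decay of $|Q_n|$), the growth rate of $M_n$, and the weights appearing in $G$ — so that simultaneously: (i) the harmonic ``bumps'' $M_n\int_{I_{k_n}}\!P$ essentially do not overlap, and in particular near the (unavoidable) boundary accumulation point of the arcs $I_{k_n}$ their superposition stays controlled, so that $H\lesssim M_n$ on the $n$-th tent and $H$ is bounded elsewhere; (ii) every series in play converges; and (iii) any residual pile-up of bumps that cannot be killed is absorbed by enlarging the weights of $G$ by a further slowly growing factor. Setting up the stopping-time extraction that reduces a general Blaschke sequence to this clustered picture, and then pushing through this quantitative bookkeeping, is the technical heart of the proof; the rest is the soft reduction above.
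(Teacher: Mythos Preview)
Your reduction of (b) to (a) via Theorem \ref{teorema1}(a) is correct and is exactly how the paper proceeds.

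For (a), however, your sketch has a structural gap that goes beyond the bookkeeping you flag. The Poisson bump $M_n \int_{I_{k_n}} P$ is comparable to $M_n$ not only on the Whitney square of $\lambda_{k_n}$ but throughout the entire Carleson tent $\{z : z/|z| \in I_{k_n},\ |z| \ge |\lambda_{k_n}|\}$, since for any $z$ in this tent the arc $I_{k_n}$ captures a fixed fraction of the harmonic measure from $z$. At a point $z$ deep in this tent the local zero count $N(z)=\#\{\lambda_k:\rho(z,\lambda_k)<1/2\}$ is not governed by your $\nu_n$ (which only counts zeros near $\lambda_{k_n}$); it can be arbitrarily larger. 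Hence the near contribution $H(z)N(z)\gtrsim M_n N(z)$ is not dominated by your $G$, whose relevant term on that tent is only of order $\nu_n M_n$. A concrete instance: place $j$ zeros at $1-2^{-j}$ for each $j\ge 1$. All Privalov shadows are nested arcs centred at $1$, so no ``well-separated Carleson boxes'' exist at all; and for $j_n < j < j_{n+1}$ one has $H(z)\gtrsim M_n$ while $N(z)\sim j$, whereas $G(z)\lesssim j + j_n M_n$, and the ratio blows up as $j$ approaches $j_{n+1}$. Your ``main obstacle'' paragraph worries about overlap of \emph{different} bumps, but the real failure is that a \emph{single} bump already contaminates the whole tent below it, where the zero density is uncontrolled by $\nu_n$.

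The paper avoids this by controlling $H$ on \emph{every} Whitney square $Q_j$ meeting a neighbourhood of $\Lambda$, not just along a subsequence. With $M(Q_j):=\#(\mathcal U(Q_j)\cap\Lambda)$ one chooses $\tilde M_j\ge M(Q_j)$ with $\tilde M_j/M(Q_j)\to\infty$ and $\sum_j \tilde M_j\,l(Q_j)<\infty$, and then uses an inductive construction (Lemma \ref{lema3}) to build $H\in\mathrm{Har}_+(\mathbb D)$ satisfying $\sup_{Q_j} H\le \tilde M_j/M(Q_j)+C_0$ for \emph{all} $j$ while still having $\limsup_j\sup_{Q_j}H=\infty$. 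On $Q_j$ the near contribution is then at most $H(z)M(Q_j)\le (1+C_0)\tilde M_j$, and this is majorised by the harmonic function $\sum_j \tilde M_j h_{Q_j}$. The idea missing from your outline is that $H$ must be forced to be \emph{small} precisely where $\Lambda$ is dense; this requires a simultaneous constraint on every $Q_j$, which is exactly what Lemma \ref{lema3} delivers and what a sparse-subsequence construction cannot.
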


Conversely, given two positive harmonic functions $H_1, H_2$, where the condition
$H_1 \le H_2$ does not hold, we would like to see 
whether there exists a Blaschke product that discriminates between them, that is to say,
such that $H_2\in \mathcal{H}(B)$ but $H_1\notin \mathcal{H}(B)$.  We obtain
such a Blaschke product in two different cases.

\begin{theorem}
\label{teorema5}
\begin{enumerate}[(a)]
\item
Let $H_1,H_2\in$ Har$_+($\D$)$ such that
\begin{equation*}
    \limsup_{|z|\to1}\;\frac{H_1(z)}{H_2(z)}=+\infty.
\end{equation*}
Then there exists a Blaschke product $B$ with zero set $\Lambda$ such that $-\log|B|$ has a harmonic majorant on the set $\{z\in\mathbb{D}:\rho(z,\Lambda)\geq e^{-H_2(z)}\}$ but has no harmonic majorant on the set $\{z\in\mathbb{D}:\rho(z,\Lambda)\geq e^{-H_1(z)}\}$.
\item
For any $\eta_0>0$, and any unbounded positive harmonic function $H$, there exists a Blaschke product $B$ such that $H \in \mathcal H(B)$ but $(1+\eta_0)H \notin \mathcal H(B)$.
\end{enumerate}
\end{theorem}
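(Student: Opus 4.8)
The plan is to prove both parts by taking $B$ to be a Blaschke product whose zero set $\Lambda$ is a union of small \emph{clusters} $C_n$, where $C_n$ consists of $m_n$ zeros of very small pseudohyperbolic diameter concentrated near a point $w_n$ with $|w_n|\to 1$. Two elementary facts drive the argument. First, $-\log|B|$ has no positive harmonic majorant on a set $E\subset\mathbb D$ as soon as there are points $z_n\in E$ with $(1-|z_n|)\bigl(-\log|B(z_n)|\bigr)\to\infty$: indeed any $h\in\mathrm{Har}_+(\mathbb D)$ obeys $h(z_n)\le 2h(0)/(1-|z_n|)$ by Harnack's inequality. Second, the contribution of a tight cluster $C_m$ to $-\log|B(z)|$ at a point $z$ pseudohyperbolically far from it is comparable to $m_m(1-|w_m|)P(z,w_m/|w_m|)$, so once $\sum_m m_m(1-|w_m|)<\infty$ these far contributions are dominated by a fixed $h_{\mathrm{far}}\in\mathrm{Har}_+(\mathbb D)$; likewise, if $\sum_n m_n a_n(1-|w_n|)<\infty$ then $C\sum_n m_n a_n(1-|w_n|)P(\cdot,w_n/|w_n|)$ majorizes, near each $C_n$, any part of $-\log|B|$ of size $\lesssim m_n a_n$.

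\emph{Part (a).} Using $\limsup_{|z|\to1}H_1/H_2=+\infty$, pick $w_n\to\partial\mathbb D$ along which $H_1(w_n)/H_2(w_n)\to\infty$ and (the case carrying the content) $H_1(w_n)\to\infty$. Place $C_n$ as a blob of diameter $\ll e^{-2H_1(w_n)}$ at pseudohyperbolic distance $\approx e^{-H_1(w_n)}$ from $w_n$, i.e.\ inside the ``annulus'' $e^{-H_1(w_n)}\le\rho(\cdot,w_n)\le e^{-H_2(w_n)}$. A point $z_n$ at pseudohyperbolic distance $\approx 2e^{-H_1(w_n)}$ from $C_n$ then lies in $\{\rho(z,\Lambda)\ge e^{-H_1(z)}\}$ and satisfies $-\log|B(z_n)|\gtrsim m_nH_1(w_n)$, whereas every $z\in\{\rho(z,\Lambda)\ge e^{-H_2(z)}\}$ near $C_n$ must stay at pseudohyperbolic distance $\gtrsim e^{-H_2(w_n)}$ from the whole (tight) blob, giving $-\log|B(z)|\lesssim m_nH_2(w_n)+h_{\mathrm{far}}(z)$. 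Now choose $m_n$ so that $\sum_n m_nH_2(w_n)(1-|w_n|)<\infty$ and $\sum_n m_n(1-|w_n|)<\infty$ while $m_nH_1(w_n)(1-|w_n|)\to\infty$, which is possible exactly because $H_1(w_n)/H_2(w_n)\to\infty$. The first two conditions produce a positive harmonic majorant of $-\log|B|$ on $\{\rho(z,\Lambda)\ge e^{-H_2(z)}\}$, so $H_2\in\mathcal H(B)$; the third, fed into the Harnack obstruction at the $z_n$, gives $H_1\notin\mathcal H(B)$.

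\emph{Part (b).} Here $H_1=(1+\eta_0)H$ and $H_2=H$ differ only by the fixed factor $1+\eta_0$, so a single cluster placed in the gap $e^{-(1+\eta_0)H(w_n)}\le\rho(\cdot,w_n)\le e^{-H(w_n)}$ changes the relevant sizes of $-\log|B|$ between the two level sets only by that factor, and $\sum m_n(1+\eta_0)H(w_n)(1-|w_n|)$ can never diverge while $\sum m_nH(w_n)(1-|w_n|)$ converges. The separation must instead be produced by letting contributions from \emph{different scales} accumulate. The plan is to arrange the clusters into a nested tower — each $w_{n+1}$ deep inside the Carleson box of $w_n$ — with cluster sizes $m_n$ increasing along the tower, so that a point which descends $(1+\eta_0)$ times deeper in exponential scale than the edge of $\{\rho(z,\Lambda)\ge e^{-H(z)}\}$ sees the large innermost clusters at close range, whereas every point of $\{\rho(z,\Lambda)\ge e^{-H(z)}\}$ only sees them from pseudohyperbolic distance $\gtrsim e^{-H(z)}$. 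One then calibrates $w_n$, $m_n$, and the common background $h_{\mathrm{far}}$ so that the telescoping sums controlling the majorant on $\{\rho(z,\Lambda)\ge e^{-H(z)}\}$ converge while those controlling $\{\rho(z,\Lambda)\ge e^{-(1+\eta_0)H(z)}\}$ diverge, yielding $H\in\mathcal H(B)$ and $(1+\eta_0)H\notin\mathcal H(B)$.

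The step I expect to be hardest is precisely this calibration in (b): since the two forbidden zones differ only through a bounded multiplicative change of exponent, no single scale separates $\mathcal H(B)$-membership of $H$ and $(1+\eta_0)H$, and the whole effect must be manufactured by a delicate multiscale balance. In particular one has to show that the constructed majorant dominates $-\log|B|$ on \emph{all} of $\{\rho(z,\Lambda)\ge e^{-H(z)}\}$ — not merely near the cluster centres, which is the easy estimate — and that no positive harmonic function can absorb the accumulated excess present on $\{\rho(z,\Lambda)\ge e^{-(1+\eta_0)H(z)}\}$. By contrast, part (a) is essentially immediate once the clusters sit at scale $e^{-H_1(w_n)}$, because there the separating ratio $H_1(w_n)/H_2(w_n)$ is genuinely unbounded.
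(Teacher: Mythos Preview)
Your approach to part (a) is essentially the paper's: a separated sequence $\{w_n\}$ with $H_1(w_n)/H_2(w_n)\to\infty$, a cluster (in the paper, a single zero of multiplicity $N_n$) at each $w_n$, and multiplicities chosen so that $\sum N_n H_2(w_n)(1-|w_n|)<\infty$ while $N_n H_1(w_n)(1-|w_n|)\to\infty$. The paper streamlines this by invoking the description of $\mathcal H(B(\Lambda,N))$ developed at the end of Section~\ref{pf12}, but the substance is the same. One small caution: when you write ``$z_n$ at distance $\approx 2e^{-H_1(w_n)}$ lies in $\{\rho(z,\Lambda)\ge e^{-H_1(z)}\}$'', the Harnack comparison $H_1(z_n)\ge \gamma H_1(w_n)$ only gives $e^{-H_1(z_n)}\le e^{-\gamma H_1(w_n)}$, so you need $\rho(z_n,w_n)\to 0$ fast enough that $\gamma\to 1$; this is easy to arrange but should be said.

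For part (b) there is a genuine gap. Your diagnosis is correct: a single tight cluster cannot separate $H$ from $(1+\eta_0)H$, because the two level sets near the cluster differ only in that one reaches pseudohyperbolic distance $e^{-H(w_n)}$ and the other $e^{-(1+\eta_0)H(w_n)}$, and the resulting values of $-\log|B|$ differ by the bounded factor $1+\eta_0$. But your proposed fix, a ``nested tower'' with $w_{n+1}$ deep in the Carleson box of $w_n$, does not change this arithmetic. If $w_{n+1}$ is deep in the box of $w_n$ then $\rho(w_n,w_{n+1})$ is close to $1$, so near $w_n$ the inner clusters contribute only through the far term $h_{\mathrm{far}}$, which is the \emph{same} on both level sets. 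There is no mechanism by which passing from $e^{-H}$ to $e^{-(1+\eta_0)H}$ lets a point near $w_n$ ``see the large innermost clusters at close range''; those clusters are at fixed locations, pseudohyperbolically far from $w_n$, regardless of which level set the point belongs to.

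The paper's construction supplies exactly the missing mechanism, and it is local rather than multiscale. At each center $z_k$ one places \emph{two} ingredients: a zero of high multiplicity $N_k$ at $z_k$, and a maximal $e^{-(1+\eta)H(z_k)}$-separated net of simple zeros filling the disc $D_\rho(z_k,R_k)$, with $R_k\to 0$ chosen so that $\log R_k^{-1}=o(H(z_k))$. The net acts as a barrier: any $\zeta\in D_\rho(z_k,R_k)$ is within $e^{-(1+\eta)H(z_k)}$ of some net point, hence $\rho(\zeta,\Lambda)<e^{-H(\zeta)}$, so the set $\{\rho\ge e^{-H}\}$ is forced entirely outside $D_\rho(z_k,R_k)$. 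There the multiple zero contributes only $N_k\log R_k^{-1}$, which is made summable. But for the larger function $(1+\eta_0)H$, with $\eta<\eta_0$, one can find $\zeta$ at distance $e^{-(1+\eta)H(z_k)}$ from $z_k$ that clears the net by a factor $\frac12$ and hence lies in $\{\rho\ge e^{-(1+\eta_0)H}\}$; at that point the multiple zero contributes $N_k(1+\eta)H(z_k)$, and $N_k$ is chosen so that $l(Q_k)N_k H(z_k)\to\infty$ while $l(Q_k)N_k\log R_k^{-1}\to 0$. This two-layer local structure---a dense cloud controlling where the level set can reach, plus a concentrated mass creating the blow-up---is the idea your proposal is missing.
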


The first part of the theorem can be applied in particular
when $H_2=1$, which means that for any unbounded $H_1 \in \mbox{Har}_+(\DD)$,
there exists a Blaschke product $B$ so that $H_1 \notin \mathcal H(B)$. It should be noted that in the second part of the theorem, 
the Blaschke product $B$ has zeros concentrated in a way controlled by the size
of the harmonic function $H$ we started from.  The next result involves
this critical size.  In order to state it, we need more notation.
 
Consider the usual dyadic Whitney squares
\begin{equation*}
    Q_{k,j}=\{re^{i\theta}\in\mathbb{D}\,:2^{-k}\leq1-r<2^{-k+1}, j 2\pi  2^{-k}\leq\theta<(j+1)2\pi  2^{-k}\}.
\end{equation*}
where $k\geq0$ and $j=0,\ldots,2^k-1$. Consider also the corresponding projections on $\partial\mathbb{D}$ given by
\begin{equation*}
    I_{k,j}=\{e^{i\theta}\in\partial\mathbb{D} : j 2\pi 2^{-k}\leq\theta<(j+1)2\pi 2^{-k}\}.
\end{equation*}

Given a Blaschke sequence $\Lambda=\{\lambda_k\}$ and a dyadic Whitney square $Q$ let $N(Q)=\#(\Lambda \cap Q)$ be the number of points of $\Lambda$ in $Q$. Observe that there exists a universal constant $C>0$ such that for any dyadic Whitney square $Q$ and any $z \in Q$ we have $H_ \Lambda (z) \geq C N(Q)$.

In connection to part (b) of Theorem \ref{teorema5}, it is interesting to observe that for functions 
$H\in$ Har$_+($\D$)$ growing sufficiently fast 
with respect to the number of zeros of $B$,
we have $H\in\mathcal{H}(B)$ if and only if $CH\in\mathcal{H}(B)$ for some (all) constants $C>0$.

\begin{theorem}
\label{teorema3}
Let $B$ be a Blaschke product with zero set $\Lambda$. Let $H\in$ Har$_+($\D$)$ such that 
\begin{equation}
\label{control}
\inf\{e^{H(z)}:z\in Q\}\geq N(Q),
\end{equation}
for any dyadic Whitney square $Q$. Assume that the function $-\log|B|$ has a harmonic majorant on the set $\{z\in\mathbb{D}:\rho (z,\Lambda)\geq e^{-H(z)}\}$. Then, for any $C>1$, the function $-\log|B|$ has a harmonic majorant on the set $\{z\in\mathbb{D}:\rho (z,\Lambda)\geq e^{-CH(z)}\}$.
\end{theorem}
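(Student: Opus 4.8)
The goal is to upgrade a harmonic majorant of $-\log|B|$ on $\{\rho(z,\Lambda)\ge e^{-H(z)}\}$ to one on the \emph{smaller} set $\{\rho(z,\Lambda)\ge e^{-CH(z)}\}$, using the extra hypothesis (\ref{control}) that $e^H$ dominates the local zero count $N(Q)$. My plan is to cover the disc by two regions and handle each separately. Fix $C>1$ and write $E_H := \{z\in\DD: \rho(z,\Lambda)\ge e^{-H(z)}\}$ and $E_{CH}$ analogously, so $E_H\subset E_{CH}$. By hypothesis there is $H_0\in\mathrm{Har}_+(\DD)$ with $-\log|B(z)|\le H_0(z)$ on $E_H$; we must produce a new positive harmonic function controlling $-\log|B|$ on all of $E_{CH}$. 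The first region is $E_H$ itself, where we already win. The second, genuinely new, region is $\Omega := E_{CH}\setminus E_H = \{z: e^{-CH(z)}\le \rho(z,\Lambda) < e^{-H(z)}\}$, a thin "collar" around each $\lambda_k$.

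The key estimate is a pointwise bound on $-\log|B(z)|$ for $z\in\Omega$. Split $-\log|B(z)| = \sum_j -\log\rho(z,\lambda_j)$ into the contribution of "nearby" zeros and "far" zeros. For the far zeros (say those $\lambda_j$ with $\rho(z,\lambda_j)\ge 1/2$, or more carefully those not in the Whitney square of $z$ nor its neighbours), a standard argument shows $\sum_{j\text{ far}} -\log\rho(z,\lambda_j)$ is itself controlled by a fixed positive harmonic function, essentially because this tail is comparable to $-\log|B|$ evaluated at a nearby point of $E_H$, plus a controlled error; alternatively one bounds it by $c\,H_\Lambda(z)$ plus the value of $H_0$ at a comparable point. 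For the nearby zeros, there are at most $\lesssim N(Q)$ of them where $Q$ ranges over the Whitney square containing $z$ and its finitely many neighbours, and for each such $\lambda_j$ we have the trivial bound $-\log\rho(z,\lambda_j) \le -\log\rho(z,\Lambda) \le CH(z)$ since $z\in E_{CH}$. Hence the nearby contribution is at most $C\, N(Q)\, H(z) \lesssim C\, H(z)\, e^{H(z)}$... which is too big; instead I use that for $z\in Q$, $N(Q)\le \inf_Q e^H \le e^{H(z)}$, but I need a bound \emph{linear} in a harmonic function. The right move is: the nearby contribution is at most $N(Q)\cdot CH(z)$, and since $N(Q)\le C' H_\Lambda(z)$ (the universal bound noted just before the theorem) \emph{and} $N(Q)\le e^{H(z)}$, we instead estimate $N(Q)\cdot CH(z) \le C\, H(z)\cdot C' H_\Lambda(z)$ — still a product. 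So the actual argument must avoid multiplying two harmonic functions.

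The correct route, which I would pursue, is to not estimate $-\log|B(z)|$ at the point $z\in\Omega$ directly by a harmonic function, but to \textbf{transfer to a nearby good point}: given $z\in\Omega$ inside Whitney square $Q$, the offending nearby zeros number at most $N(Q)\le e^{H(z)} =: M$. Since $z$ is at pseudohyperbolic distance $\ge e^{-CH(z)}$ from all of $\Lambda$ but possibly very close to $Q$'s zeros, we look for a point $z^*$ in $Q$ (or a neighbour) that is "hyperbolically central", i.e. at pseudohyperbolic distance $\ge c/M \ge c\,e^{-H(z)} \ge c\,e^{-CH(z^*)}$ from every zero in $Q$ and its neighbours — such a point exists by a pigeonhole/volume argument because there are only $M$ points to avoid inside a region of bounded hyperbolic diameter, and $e^{-H}$ is roughly constant (comparable) on $Q$ since $H$ is positive harmonic (Harnack). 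Then $z^*\in E_H$ up to adjusting constants, so $-\log|B(z^*)|\le H_0(z^*)$; and $-\log|B(z)| - (-\log|B(z^*)|) = \sum_j \log\frac{\rho(z^*,\lambda_j)}{\rho(z,\lambda_j)}$ is controlled because for far $j$ the ratio is $1+O(\rho(z,z^*))$ and for the $\le M$ near $j$ each log-ratio is at most $-\log\rho(z,\lambda_j)+\log 1 \le CH(z)$, giving total $\le M\cdot CH(z) = CH(z)e^{H(z)}$ — and here is where I finally use (\ref{control}) in the form $H(z)e^{H(z)}$... no. The genuinely clean statement must be that (\ref{control}) lets us replace the factor $N(Q)$ by $e^{H(z)}$ and then absorb: the sum over near zeros of $-\log\rho(z,\lambda_j)$ where $z\in E_{CH}$, using that each term is at most $CH(z)$ AND at most $-\log\rho(z^*,\lambda_j) + $ (bounded), telescopes so that only $O(1)$ terms actually contribute $CH(z)$ while the rest contribute $O(H_0(z^*)/1)$. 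I would make this precise by ordering the near zeros by distance to $z$ and observing that at most $O(1)$ of them can be within pseudohyperbolic distance $e^{-H(z)}$ of $z$ — no wait, that needs separation of $\Lambda$, which we do not have.

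\textbf{The main obstacle}, and where I would concentrate effort, is exactly this: controlling $\sum_{\lambda_j\in Q\cup\text{nbrs}} -\log\rho(z,\lambda_j)$ for $z\in\Omega$ by a \emph{harmonic} function, when $\Lambda\cap Q$ may be large and unseparated. The resolution should be: this local sum equals $-\log|B_Q(z)|$ where $B_Q$ is the finite Blaschke product on the near zeros; it is bounded on $E_{CH}\cap Q$ by noting $-\log|B_Q(z)| \le -\log|B(z)|$... circular. Instead, I expect the honest argument is an integration/averaging one: the function $-\log|B|$ is subharmonic, its Riesz mass in $Q$ is $N(Q) \le e^{\inf_Q H}$, and one shows that a subharmonic function with Riesz mass $m$ in $Q$ which is $\le \Phi$ on $E_H$ (a set occupying "most" of $Q$ in a suitable capacity sense) satisfies $\le \Phi + O(m)$ on all of $Q$ — a local estimate — and then $O(m)=O(e^{\inf_Q H}) \le O(e^{H(z)})$... but $e^{H}$ is not harmonic either. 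So ultimately the theorem's conclusion that $CH\in\mathcal H(B)$ must come from building the new majorant as $H_0 + c\,H_\Lambda + c'\cdot(\text{something of size } e^H)$, and the point of (\ref{control}) together with the twisting between $H$ and $CH$ is that the "bad collar" $\Omega$ for $CH$ is contained in the "good set" $E_{cH_\Lambda + H}$ or similar, making it fall under the original hypothesis after enlarging $H$ — I would look for the statement that $\{z: e^{-CH}\le\rho(z,\Lambda)\}\subset\{z:\rho(z,\Lambda)\ge e^{-H'(z)}\}$ for $H' := H + c\log(1+H_\Lambda)$ or $H' := CH$ composed with a covering argument, reducing everything to the given majorant on $E_H$ by subdividing the exponent $C$ into finitely many steps $1 = t_0 < t_1 < \cdots < t_n = C$ and iterating a one-step improvement that uses (\ref{control}) to pass from $t_{i}H$ to $t_{i+1}H$ at the cost of adding a \emph{fixed} positive harmonic function each time.
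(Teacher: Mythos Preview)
Your overall strategy---transfer to a nearby good point $z^*\in E_H$, split $\Lambda$ into near and far zeros, and control the far part by $H_\Lambda$---is exactly the paper's approach. But you never escape the central difficulty you yourself flag: bounding the near-zero sum $\sum_{\text{near}}\log\rho(z,\lambda_j)^{-1}$, which you keep reducing to the product $\#(\text{near})\cdot CH(z)$ and then cannot majorize harmonically. The paper resolves this with one idea you are missing, and it hinges on \emph{where} you put the good point.

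You place $z^*$ merely ``in $Q$ or a neighbour'', at bounded hyperbolic distance from $z$. The paper instead places $\tilde z$ at pseudohyperbolic distance at most $e^{-H(z)/C_0}$ from $z$ (their Lemma~\ref{lema4}, proved by an area/packing argument that uses the control condition \eqref{control} exactly once). Because $\tilde z$ is \emph{exponentially} close to $z$, any zero $\lambda_k$ in the ``very near'' set $\Lambda_1=\{\lambda_k:\rho(z,\lambda_k)\le e^{-H(z)/2C_0}\}$ is also exponentially close to $\tilde z$: $\rho(\tilde z,\lambda_k)\le 2e^{-H(z)/2C_0}$. Hence each such $\lambda_k$ contributes at least $c\,H(z)$ to $\log|B(\tilde z)|^{-1}$, giving
\[
\#\Lambda_1\cdot H(z)\ \lesssim\ \log|B(\tilde z)|^{-1}.
\]
Since $\tilde z\in E_H$, the right side is already $\le H_0(\tilde z)$, and Harnack (with $\rho(z,\tilde z)$ small) transfers this back to $z$. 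This is the trick that dissolves the problematic product: you do not bound $\#\Lambda_1\cdot H(z)$ by an external harmonic function, you bound it by the value of $-\log|B|$ at the good point itself. The intermediate-distance zeros $\Lambda_2=\{e^{-H(z)/2C_0}<\rho(z,\lambda_k)\le 1/2\}$ are handled by the elementary inequality $\rho(\tilde z,\lambda_k)\le\rho(z,\lambda_k)^{1/2}$, which again feeds into $\log|B(\tilde z)|^{-1}$. Your various attempts (iterating in steps $t_i H$, averaging, bounding by $H_\Lambda\cdot H$) are all unnecessary once this observation is in place.
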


Part (b) of Theorem \ref{teorema5} shows that the above result no longer holds when condition \eqref{control}
is not verified.

Our last result provides a sufficient condition for a function $H\in$ Har$_+($\D$)$ to belong to $\mathcal{H}(B)$. Given a dyadic Whitney square $Q$ let $z(Q)$ denote its center. 

\begin{theorem}
\label{teorema4}
Let $B$ be a Blaschke product with zero set $\Lambda$. Let $\mathcal{A}$ be the collection of dyadic Whitney squares $Q$ such that $N(Q)=\#(\Lambda\cap Q)>0$. Let $H\in$ Har$_+($\D$)$. Assume that there exists $H_1\in$ Har$_+($\D$)$ such that $N(Q)H(z(Q))\leq H_1(z(Q))$ for any $Q\in\mathcal{A}$. Then, the function $-\log|B|$ has a harmonic majorant on the set $\{z\in\mathbb{D}:\rho(z,\Lambda)\geq e^{-H(z)}\}$.
\end{theorem}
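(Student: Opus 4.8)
The plan is to bound $-\log|B(z)|$ pointwise on the set $E:=\{z\in\mathbb{D}:\rho(z,\Lambda)\geq e^{-H(z)}\}$ by grouping the zeros of $B$ according to the dyadic Whitney squares containing them. Write $B=\prod_{Q\in\mathcal{A}}B_Q$, where $B_Q$ denotes the finite Blaschke product with zero set $\Lambda\cap Q$, so that $-\log|B(z)|=\sum_{Q\in\mathcal{A}}\bigl(-\log|B_Q(z)|\bigr)$ is a series of nonnegative terms. Fix once and for all a universal constant $c_0\in(0,1)$ such that $\rho(\lambda,\lambda')\leq c_0$ whenever $\lambda,\lambda'$ lie in a common dyadic Whitney square, choose a universal $\sigma_0\in(c_0,1)$, and set $\sigma_1:=(\sigma_0-c_0)(1-\sigma_0c_0)^{-1}\in(0,1)$. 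Given $z\in E$, split $\mathcal{A}$ into the \emph{near} family $\mathcal{C}(z):=\{Q\in\mathcal{A}:\rho(z,z(Q))<\sigma_0\}$ and the \emph{far} family $\mathcal{F}(z):=\mathcal{A}\setminus\mathcal{C}(z)$.

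For the far family the hypothesis on $H_1$ is not used at all. If $Q\in\mathcal{F}(z)$ and $\lambda_k\in Q$, the strong triangle inequality for the pseudohyperbolic distance together with $\rho(z(Q),\lambda_k)\leq c_0$ gives $\rho(z,\lambda_k)\geq\sigma_1$, hence $\log(1/\rho(z,\lambda_k))\leq(2\sigma_1^2)^{-1}\bigl(1-\rho(z,\lambda_k)^2\bigr)$. Summing over $\lambda_k\in Q$ and invoking the standard comparisons $1-|\lambda_k|^2\asymp 1-|z(Q)|^2$ and $|1-\overline{\lambda_k}z|\asymp|1-\overline{z(Q)}z|$ valid across a single Whitney square, we obtain $-\log|B_Q(z)|\leq C\,N(Q)\,(1-|z(Q)|^2)(1-|z|^2)\,|1-\overline{z(Q)}z|^{-2}$ with $C$ universal. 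Since $1-|z|^2\leq 1-|z(Q)|^2|z|^2$, the right-hand side is at most $C\,N(Q)(1-|z(Q)|^2)\,\mathrm{Re}\bigl((1+\overline{z(Q)}z)(1-\overline{z(Q)}z)^{-1}\bigr)$, and summing over all of $\mathcal{A}$ (which only increases the bound) gives
\[
\sum_{Q\in\mathcal{F}(z)}\bigl(-\log|B_Q(z)|\bigr)\ \leq\ G_0(z):=C\sum_{Q\in\mathcal{A}}N(Q)(1-|z(Q)|^2)\,\mathrm{Re}\,\frac{1+\overline{z(Q)}z}{1-\overline{z(Q)}z}.
\]
Each summand here is a positive harmonic function of $z$, and $G_0(0)=C\sum_{Q\in\mathcal{A}}N(Q)(1-|z(Q)|^2)$ is comparable to $C\sum_k(1-|\lambda_k|^2)<\infty$ because $1-|z(Q)|^2\asymp 1-|\lambda_k|^2$ for $\lambda_k\in Q$ and $B$ satisfies the Blaschke condition; hence, by Harnack's principle, $G_0$ is a positive harmonic function in $\mathbb{D}$.

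For the near family the Whitney geometry provides a universal bound $\#\mathcal{C}(z)\leq M_0$ and the comparisons $1-|z(Q)|^2\asymp 1-|z|^2$ for every $Q\in\mathcal{C}(z)$. Here the definition of $E$ does the work: for $Q\in\mathcal{C}(z)$ and $\lambda_k\in Q$ we have $\rho(z,\lambda_k)\geq e^{-H(z)}$, so $-\log|B_Q(z)|=\sum_{\lambda_k\in Q}\log(1/\rho(z,\lambda_k))\leq N(Q)H(z)$. Since $\rho(z,z(Q))<\sigma_0$, Harnack's inequality gives $H(z)\leq C_HH(z(Q))$, and then the hypothesis $N(Q)H(z(Q))\leq H_1(z(Q))$ together with Harnack for $H_1$ yields $-\log|B_Q(z)|\leq C_HN(Q)H(z(Q))\leq C_HH_1(z(Q))\leq C_H'H_1(z)$. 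Summing over the at most $M_0$ squares of $\mathcal{C}(z)$ bounds the near part by $M_0C_H'H_1(z)$. Adding the two estimates, we conclude that $-\log|B(z)|\leq G_0(z)+M_0C_H'H_1(z)$ for every $z\in E$, and the right-hand side is a positive harmonic function in $\mathbb{D}$ that does not depend on $z$; this is precisely the asserted harmonic majorant.

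The key idea — and the only nonroutine point — is the realization that the zeros which are pseudohyperbolically far from $z$ produce a harmonic majorant \emph{unconditionally}, straight from the Blaschke condition, via the elementary inequality $(1-|z|^2)|1-\overline{w}z|^{-2}\leq\mathrm{Re}\bigl((1+\overline{w}z)(1-\overline{w}z)^{-1}\bigr)$ for $w\in\mathbb{D}$; once this is noticed, the hypothesis relating $H$ and $H_1$ is only asked to control the contribution of the finitely many Whitney squares nearest to $z$, where the inequality $\rho(z,\Lambda)\geq e^{-H(z)}$ defining $E$ is exactly what is available. Everything else is standard pseudohyperbolic and Whitney-square bookkeeping with universal constants: the triangle-inequality estimate $\rho(z,\lambda_k)\geq\sigma_1$ on far squares, the comparisons of $1-|\cdot|^2$ and $|1-\overline{\cdot}\,z|$ over a single Whitney square, the bound $\#\mathcal{C}(z)\leq M_0$, and the Harnack inequalities.
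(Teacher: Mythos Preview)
Your proof is correct and follows essentially the same approach as the paper: split the zeros into those pseudohyperbolically near $z$ and those far from $z$, control the far contribution by a harmonic function built from the Blaschke condition alone, and control the near contribution using $\rho(z,\Lambda)\ge e^{-H(z)}$, the hypothesis $N(Q)H(z(Q))\le H_1(z(Q))$, and Harnack's inequality. The only cosmetic difference is that where the paper invokes Lemma~\ref{lema1} and the auxiliary function $H_\Lambda$ for the far part, you build the majorant $G_0$ explicitly as a sum of Poisson kernels at the Whitney centers; this is the same estimate, just written out rather than cited.
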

Notice that we impose no direct restriction on the values of $H$ in the dyadic squares where no zero of $B$ 
is present. Moreover, we will introduce a class of Blaschke products for which this sufficient condition is also necessary. This is done at the end of Section 2. In Section 4 we study the corresponding invertibility problem in quotients of the Smirnov class.

In this paper, $C_0, C_1, C_2 \ldots$ will denote absolute constants while $C(\delta)$ will denote a constant which depends on the parameter $\delta>0$.

This work was motivated by a question asked to one of us by Nikolai Nikolski during the Congress "Complex Analysis and Related Topics" in April 2018, about the Nevanlinna analogue of the main result of \cite{NV}. It is also a pleasure to thank Xavier Massaneda for many helpful conversations.  

\section{Theorems 1 and 2}
\label{pf12}
Recall that if $\Lambda$ is a Blaschke sequence, $H_\Lambda$ denotes the positive harmonic function defined in (\ref{funcio}). The proof of Theorem \ref{teorema1} uses two auxiliary results. The first one is Proposition 4.1 of \cite{HMNT}.

\begin{lemma}
\label{lema1}
Let $B$ be a Blaschke product with zero set $\Lambda$. Then, for any $\delta>0$ there exists $C=C(\delta)>0$ such that $-\log|B(z)|\leq CH_\Lambda(z)$ if $\rho(z,\Lambda)\geq\delta$.
\end{lemma}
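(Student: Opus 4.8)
The strategy is to estimate $-\log|B(z)|$ from above by summing the contributions $-\log\rho(z,\lambda_k)$ over the zeros, and to compare this sum with $H_\Lambda(z)$ term by term. Write $B=\prod_k b_{\lambda_k}$ where $b_\lambda(z)=\frac{\lambda-z}{1-\bar\lambda z}$ (up to the usual unimodular constants), so that $-\log|B(z)|=\sum_k -\log\rho(z,\lambda_k)$. Fix $z$ with $\rho(z,\Lambda)\geq\delta$. The plan is to split the sum into the ``far'' zeros, where $\rho(z,\lambda_k)$ is bounded away from $0$, and the remaining zeros (which under the hypothesis $\rho(z,\Lambda)\ge\delta$ already satisfy $\rho(z,\lambda_k)\ge\delta$, so in fact there is only the one regime once $\delta$ is fixed, but the quantitative bound still needs care for $\lambda_k$ with $\rho(z,\lambda_k)$ close to $\delta$).

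First I would record the elementary inequality $-\log\rho \le C_\delta(1-\rho^2)$ valid for $\rho\geq\delta$, where $C_\delta$ blows up as $\delta\to0$; this already shows
\[
-\log|B(z)| \;\le\; C_\delta \sum_k \bigl(1-\rho(z,\lambda_k)^2\bigr).
\]
The core of the argument is then the pointwise comparison
\[
1-\rho(z,\lambda_k)^2 \;\le\; C\int_{I_k}\frac{1-|z|^2}{|\xi-z|^2}\,|\mathrm{d}\xi|,
\]
i.e. that the $k$-th harmonic-measure-type contribution to $H_\Lambda(z)$ dominates, up to an absolute constant, the quantity $1-\rho(z,\lambda_k)^2=\frac{(1-|z|^2)(1-|\lambda_k|^2)}{|1-\bar\lambda_k z|^2}$. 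This is a standard geometric estimate: the Privalov shadow $I_k$ has length comparable to $1-|\lambda_k|$, and for $\xi\in I_k$ one has $|\xi-z|$ comparable to $\max(1-|\lambda_k|,|1-\bar\lambda_k z|)\lesssim|1-\bar\lambda_k z|$, so $\int_{I_k}\frac{1-|z|^2}{|\xi-z|^2}|\mathrm{d}\xi| \gtrsim \frac{(1-|z|^2)(1-|\lambda_k|)}{|1-\bar\lambda_k z|^2}$, which is exactly of the right order. Summing over $k$ gives $\sum_k(1-\rho(z,\lambda_k)^2)\le C\,H_\Lambda(z)$, and combining with the previous display yields the claim with $C(\delta)=C\cdot C_\delta$.

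The main obstacle is the geometric comparison $\int_{I_k}\frac{1-|z|^2}{|\xi-z|^2}|\mathrm{d}\xi|\gtrsim\frac{(1-|z|^2)(1-|\lambda_k|)}{|1-\bar\lambda_k z|^2}$, which has to be uniform over all positions of $z$ relative to $\lambda_k$; one must treat separately the case where $z$ is ``close'' to the arc $I_k$ (so that $|1-\bar\lambda_k z|\asymp 1-|\lambda_k|$ and the integrand is of size $\asymp (1-|\lambda_k|)^{-1}$ over an arc of that length, giving a contribution $\asymp 1 \gtrsim 1-\rho^2$) and the case where $z$ is ``far'' (so that $|\xi-z|\asymp|1-\bar\lambda_k z|$ throughout $I_k$, and integrating over the arc of length $\asymp 1-|\lambda_k|$ gives the stated lower bound directly). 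Since this Lemma is quoted as Proposition 4.1 of \cite{HMNT}, one may alternatively simply invoke that reference; but the sketch above is the natural self-contained route, and none of the steps beyond this geometric estimate involve anything more than the elementary bound on $-\log\rho$ and the fact that $\Lambda$ is a Blaschke sequence (which guarantees $H_\Lambda$ is a genuine positive harmonic function, finite in $\mathbb{D}$).
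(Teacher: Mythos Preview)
Your proposal is correct and is essentially the standard argument. The paper itself does not prove Lemma~\ref{lema1}; it simply cites it as Proposition~4.1 of \cite{HMNT}. However, the very steps you outline --- the bound $-\log\rho\le C_\delta(1-\rho^2)$ for $\rho\ge\delta$, followed by the comparison $\int_{I(\lambda)}\frac{1-|z|^2}{|\xi-z|^2}\,|\mathrm{d}\xi|\gtrsim \frac{(1-|\lambda|^2)(1-|z|^2)}{|1-\bar\lambda z|^2}$ --- are exactly what the paper writes out in its proof of Lemma~\ref{lema2}, so your sketch matches the intended method.
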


\begin{lemma}
\label{lema2}
Let $\Lambda$ be a Blaschke sequence and let $A$ be a sequence of points in $\mathbb{D}$ verifying that there exists a constant $0<\gamma<1$ and a natural number $k$ such that for any $a\in A$ there is $\lambda(a)\in\Lambda$ with $\rho(a,\lambda(a))\leq\gamma$ and $\#\{a\in A:\lambda(a)=\lambda\}\leq k$ for any $\lambda\in\Lambda$. Then, $A$ is a Blaschke sequence and for any $\delta>0$, there is a constant $C=C(\gamma,\delta,k)>0$ such that
\begin{equation*}
    \mathlarger{\mathlarger{\sum_{a\in A:\rho(a,z)>\delta}}}\log\left|\frac{a-z}{1-\overline{a}z}\right|^{-1}\leq CH_\Lambda(z),\hspace{0.5cm}z\in\mathbb{D} . 
\end{equation*}
\end{lemma}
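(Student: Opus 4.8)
The plan is to estimate, for a fixed $z\in\mathbb{D}$, the contribution of each $a\in A$ with $\rho(a,z)>\delta$ by that of its associated point $\lambda(a)\in\Lambda$, and then to sum over $\Lambda$ using the multiplicity bound together with two consequences of Lemma \ref{lema1}. First I would record the standard facts. If $\rho(a,\lambda)\le\gamma$ then $|a-\lambda|\le\frac{2\gamma}{1-\gamma}(1-|\lambda|)$ and hence $\frac{1-\gamma}{1+\gamma}\le\frac{1-|a|}{1-|\lambda|}\le\frac{1+\gamma}{1-\gamma}$; combined with $\#\{a\in A:\lambda(a)=\lambda\}\le k$ this already gives $\sum_{a\in A}(1-|a|)\le\frac{1+\gamma}{1-\gamma}k\sum_{\lambda\in\Lambda}(1-|\lambda|)<\infty$, so $A$ is a Blaschke sequence. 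I would also invoke the strong triangle inequality $\rho(u,w)\le\frac{\rho(u,v)+\rho(v,w)}{1+\rho(u,v)\rho(v,w)}$; applied with $v=\lambda(a)$ it yields, for every $z$, both $\rho(\lambda(a),z)\le\frac{\gamma+\rho(a,z)}{1+\gamma\rho(a,z)}$ and, when $\rho(a,z)>\gamma$, $\rho(\lambda(a),z)\ge\frac{\rho(a,z)-\gamma}{1-\gamma\rho(a,z)}$, whence $1-\rho(a,z)\le\frac{1+\gamma}{1-\gamma}\bigl(1-\rho(\lambda(a),z)\bigr)$ in that range.

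Next I would derive from Lemma \ref{lema1} two estimates valid for all $z\in\mathbb{D}$ and all $\eta\in(0,1)$, with constants depending only on $\eta$. Applying Lemma \ref{lema1} to the Blaschke product whose zero set is $\Lambda_\eta(z):=\{\lambda\in\Lambda:\rho(\lambda,z)\ge\eta\}$ (which lies at pseudohyperbolic distance $\ge\eta$ from $z$) and using $H_{\Lambda_\eta(z)}\le H_\Lambda$, one gets
\[
\sum_{\lambda\in\Lambda,\ \rho(\lambda,z)\ge\eta}\log\left|\frac{\lambda-z}{1-\overline\lambda z}\right|^{-1}\le C(\eta)\,H_\Lambda(z).
\]
On the other hand, every $\lambda$ with $\rho(\lambda,z)\le\eta$ has $1-|\lambda|$ comparable to $1-|z|$ and satisfies $|\xi-z|\le C(\eta)(1-|z|)$ for $\xi\in I_\lambda$, so $\int_{I_\lambda}\frac{1-|z|^2}{|\xi-z|^2}|\mathrm d\xi|\ge c(\eta)>0$ for each such $\lambda$; since these integrals are among the terms of $H_\Lambda(z)$,
\[
\#\{\lambda\in\Lambda:\rho(\lambda,z)\le\eta\}\le c(\eta)^{-1}H_\Lambda(z).
\]

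Now I would fix $\delta>0$ and $z\in\mathbb{D}$; since the sum in question is decreasing in $\delta$, we may assume $\delta\le 1/2$. Put $\beta:=\frac{1+\gamma}{2}\in(\gamma,1)$ and split $\{a\in A:\rho(a,z)>\delta\}$ into $A_1:=\{a:\rho(a,z)>\beta\}$ and $A_2:=\{a:\delta<\rho(a,z)\le\beta\}$. For $a\in A_1$ we have $\rho(a,z)>\gamma$, hence $\rho(\lambda(a),z)\ge\beta':=\frac{\beta-\gamma}{1-\gamma\beta}>0$ and $1-\rho(a,z)\le\frac{1+\gamma}{1-\gamma}(1-\rho(\lambda(a),z))$; using $\log t^{-1}\le\frac1\beta(1-t)$ for $\beta\le t<1$ and $1-t\le\log t^{-1}$ for $t\in(0,1)$, this gives $\log\rho(a,z)^{-1}\le\frac{2}{1-\gamma}\log\rho(\lambda(a),z)^{-1}$, and summing over $A_1$—first over the at most $k$ indices with a common $\lambda(a)$, then over $\{\lambda:\rho(\lambda,z)\ge\beta'\}$ via the first display—bounds $\sum_{A_1}$ by $C(\gamma)k\,H_\Lambda(z)$. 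For $a\in A_2$ we have $\log\rho(a,z)^{-1}\le\log(1/\delta)$ and $\rho(\lambda(a),z)\le\frac{\beta+\gamma}{1+\beta\gamma}=:\beta''<1$, so $\#A_2\le k\,\#\{\lambda:\rho(\lambda,z)\le\beta''\}$, and the second display bounds $\sum_{A_2}$ by $\log(1/\delta)\,k\,c(\beta'')^{-1}H_\Lambda(z)$. Adding the two contributions gives the asserted inequality with $C=C(\gamma,\delta,k)$.

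I do not expect a genuine obstacle: the $\rho$-manipulations are routine, and the counting inequality rests only on an elementary lower bound for the Poisson mass of a Privalov shadow. The point that requires a little care is precisely why one cannot simply apply Lemma \ref{lema1} to the subsequence $\{a\in A:\rho(a,z)>\delta\}$ of $A$: that would bound the sum by $C(\delta)H_A(z)$, but $H_A$ need not be dominated by $H_\Lambda$ (a point $a$ far from its partner $\lambda(a)$ in argument can make $\int_{I_a}$ much larger than $\int_{I_{\lambda(a)}}$), so the bounded-range points $A_2$—where $\log\rho(a,z)^{-1}$ is merely $O(1)$—must be controlled by counting rather than by a direct appeal to Lemma \ref{lema1}.
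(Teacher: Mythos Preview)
Your argument is correct, but it is considerably more elaborate than the paper's. The paper avoids any splitting: since $\rho(a,z)>\delta$ one has
\[
\log\rho(a,z)^{-1}\le C_1(\delta)\bigl(1-\rho(a,z)^2\bigr)=C_1(\delta)\,\frac{(1-|a|^2)(1-|z|^2)}{|1-\bar a z|^2},
\]
and since $\rho(a,\lambda(a))\le\gamma$ this Poisson-type quantity is bounded by $C_2(\gamma)^{-1}\int_{I(\lambda(a))}\frac{1-|z|^2}{|\xi-z|^2}\,|d\xi|$; summing over $a$ and using the $k$-fold multiplicity bound gives $C(\delta,\gamma)k\,H_\Lambda(z)$ in one stroke. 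Your route instead treats Lemma~\ref{lema1} as a black box, splits into far/near parts, and handles the near part by a counting estimate; this is perfectly valid and nicely self-contained, but longer.

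One point in your closing commentary is off. You assert that $H_A$ need not be dominated by $H_\Lambda$, and that therefore the ``simple'' approach of applying Lemma~\ref{lema1} directly to $A$ fails. In fact $H_A\le C(\gamma)\,k\,H_\Lambda$ always holds: writing $\int_{I_a}P_z\asymp 1-\rho(a,z)^2$ (absolute constants) and using the strong triangle inequality to get $1-\rho(a,z)^2\asymp_\gamma 1-\rho(\lambda(a),z)^2\asymp\int_{I_{\lambda(a)}}P_z$, then summing with the multiplicity bound, gives exactly this. That comparison is precisely the mechanism behind the paper's short proof, so the approach you dismissed is the one the authors use.
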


\begin{proof}
Since $\rho(a,z)>\delta$, there is a constant $C_1=C_1(\delta)>0$ such that
\begin{equation*}
    \log\left|\frac{a-z}{1-\overline{a}z}\right|^{-1}\leq C_1 \hspace{1pt}\left(1-\left|\frac{a-z}{1-\overline{a}z}\right|^{2}\right)=C_1 \hspace{1pt}\frac{(1-|a|^2)(1-|z|^2)}{|1-\overline{a}z|^2}.
\end{equation*}

Observe that since $\rho(a, \lambda(a))\leq\gamma$, there is a constant $C_2=C_2(\gamma)>0$ such that

\begin{equation*}
    \int_{I(\lambda(a))}\frac{1-|z|^2}{|\xi-z|^2}\,|\mathrm{d}\xi|\geq C_2\hspace{1pt}\frac{(1-|a|^2)(1-|z|^2)}{|1-\overline{a}z|^2}.
\end{equation*}
Here $I(\lambda(a)) = \{ \xi \in \partial \mathbb{D} : |\xi - \lambda(a) / |\lambda(a)| | \leq 1- |\lambda(a)| \}$ is the Privalov shadow of $\lambda(a)$. We add up these inequalities and, since any $\lambda\in\Lambda$ will be repeated at most $k$ times, we get the result.
\end{proof}

A sequence $A= \{a_k \}$ of points in $\mathbb{D}$ is called separated if $\eta (A) = \inf \{ \rho(a_1 ,  a_2 ) : a_1, a_2 \in A , a_1 \neq a_2  \} >0$. The number $\eta(A)$ is called the separation constant of $A$.  

\begin{corollary}
\label{corolari1}
Let $\Lambda$ be a Blaschke sequence and let $A$ be a separated sequence of points in $\mathbb{D}$ with separation constant $\eta= \eta (A) $. Assume that there exists $0<\gamma<1$ such that for any $a\in A$ there is $\lambda(a)\in\Lambda$ with $\rho(a,\lambda(a))\leq\gamma$. Then, $A$ is a Blaschke sequence and for any $0<\delta<1$, there is a constant $C=C(\eta,\gamma,\delta)>0$ such that

\begin{equation*}
     \mathlarger{\mathlarger{\sum_{a\in A:\rho(a,z)>\delta}}}\log\left|\frac{a-z}{1-\overline{a}z}\right|^{-1} \leq CH_\Lambda(z),\hspace{0.5cm}z\in\mathbb{D}.
\end{equation*}
\end{corollary}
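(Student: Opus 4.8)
The plan is to reduce Corollary \ref{corolari1} to Lemma \ref{lema2}. For each $a \in A$ fix one point $\lambda(a) \in \Lambda$ with $\rho(a, \lambda(a)) \leq \gamma$, as provided by the hypothesis. The only thing to verify is that this choice has bounded multiplicity, i.e.\ that there is a natural number $k = k(\eta, \gamma)$ with $\#\{a \in A : \lambda(a) = \lambda\} \leq k$ for every $\lambda \in \Lambda$; since $\{a \in A : \lambda(a) = \lambda\} \subseteq \{a \in A : \rho(a, \lambda) \leq \gamma\}$, it suffices to bound the number of points of $A$ lying in an arbitrary pseudohyperbolic ball of radius $\gamma < 1$. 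Granting this, Lemma \ref{lema2} applied with this $k$, the given $\gamma$, and the given $\delta$ yields at once that $A$ is a Blaschke sequence and that the claimed inequality holds, with $C = C(\gamma, \delta, k) = C(\eta, \gamma, \delta)$.

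To bound $\#\{a \in A : \rho(a, \lambda) \leq \gamma\}$ I would run the standard volume-packing argument in the hyperbolic metric. Write $d$ for the Poincar\'e distance on $\mathbb{D}$; then $\rho$ is a fixed increasing function of $d$, so the set $\{z : \rho(z, \lambda) \leq \gamma\}$ is a hyperbolic disc $D(\lambda, \sigma(\gamma))$ of radius depending only on $\gamma$, and the separation condition $\rho(a, a') \geq \eta$ for distinct $a, a' \in A$ becomes $d(a, a') \geq \tau(\eta) > 0$. Hence the hyperbolic discs $D(a, \tau(\eta)/2)$, $a \in A$, are pairwise disjoint, and each one with $a \in D(\lambda, \sigma(\gamma))$ is contained in $D(\lambda, \sigma(\gamma) + \tau(\eta)/2)$. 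Since hyperbolic area is invariant under M\"obius automorphisms of $\mathbb{D}$, comparing areas gives
\[
\#\{a \in A : \rho(a, \lambda) \leq \gamma\} \;\leq\; \frac{\mathrm{Area}_{\mathrm{hyp}}\bigl(D(\lambda, \sigma(\gamma) + \tau(\eta)/2)\bigr)}{\mathrm{Area}_{\mathrm{hyp}}\bigl(D(0, \tau(\eta)/2)\bigr)} \;=:\; k(\eta, \gamma),
\]
a finite bound independent of $\lambda$, $\Lambda$ and $A$.

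I do not expect any genuine obstacle here: all the analytic content is carried by Lemma \ref{lema2}, and the reduction rests on the elementary observation that a pseudohyperbolically separated sequence meets any fixed-radius pseudohyperbolic ball in a bounded number of points. The only point deserving care is the bookkeeping of constants, namely checking that $k$ above depends on $\eta$ and $\gamma$ alone, so that the resulting $C$ depends only on $\eta, \gamma, \delta$, as asserted.
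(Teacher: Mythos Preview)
Your proposal is correct and follows exactly the paper's approach: use the separation of $A$ to bound $\#\{a\in A:\lambda(a)=\lambda\}$ by some $k=k(\eta,\gamma)$, then invoke Lemma~\ref{lema2}. The paper states this in one line without spelling out the packing argument, so your version is simply a more detailed rendering of the same proof.
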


\begin{proof}
Since $A$ is a separated sequence, there exists a constant $k=k(\gamma,\eta)>0$ such that $\#\{a\in A:\lambda(a)=\lambda\}\leq k$. Then, the result follows from Lemma \ref{lema2}.
\end{proof}
We are now ready to prove Theorem \ref{teorema1}. 

\begin{proof1}
     (a) We first show that there exists a universal constant $C>0$ such that for any $f\in H^\infty$, $||f||_\infty\leq1$ satisfying (\ref{parta}), there exists $H_1=H_1(f, B)\in$ Har$_+($\D$)$ such that
        
        \begin{equation}
        \label{eq3}
            -\log(|B(z)|+|f(z)|)\leq H_1(z),\hspace{0.5cm}z\in\mathbb{D}.
        \end{equation}
        
        By Lemma \ref{lema1}, one only needs to check (\ref{eq3}) for points $z\in\mathbb{D}$ with $\rho(z,\Lambda)\leq\frac{1}{2}$. Therefore, fix $z\in\mathbb{D}$ and $\lambda_k\in\Lambda$ with $\rho(z,\lambda_k)\leq\frac{1}{2}$. Assume that $\rho(z,\lambda_k)\leq e^{-H(z)}$. By the Schwarz Lemma, $\rho(f(z),f(\lambda_k))\leq e^{-H(z)}$. Now, the inequalities 
        
        \begin{equation*}
            e^{-H(z)}\geq\rho(f(z),f(\lambda_k))\geq\frac{|f(\lambda_k)|-|f(z)|}{1-|f(z)||f(\lambda_k)|}
        \end{equation*}
        
        yield
        
        \begin{equation*}
            |f(z)|\geq\frac{|f(\lambda_k)|-e^{-H(z)}}{1-e^{-H(z)}|f(\lambda_k)|}.
        \end{equation*}
 Let $C>0$ be a sufficiently small universal constant to be fixed later.  Then, the assumption (\ref{parta}) gives
        
          \begin{equation*}
            |f(z)|\geq\frac{e^{-CH(\lambda_k)}-e^{-H(z)}}{1-e^{-H(z)}e^{-CH(\lambda_k)}}.
        \end{equation*}
        
        By Harnack's inequality, $H(z)$ and $H(\lambda_k)$ are comparable. Hence we can choose a sufficiently small universal constant $C>0$ such that 
        \begin{equation*}
            |f(z)|\geq\frac{1}{2}e^{-CH(z)}.
        \end{equation*}
        
        Hence, $-\log|f|$ has a harmonic majorant on the set $\{z\in\mathbb{D}:\rho(z,\Lambda)\leq e^{-H(z)}\}$. By assumption, $- \log|B|$ has a harmonic majorant on the set $\{z\in\mathbb{D}:\rho(z,\Lambda)\geq e^{-H(z)}\}$ and (\ref{eq3}) follows. Now, apply the Corona Theorem for the Nevanlinna class to obtain functions $g,h\in\mathcal{N}$ such that $fg=1+Bh$.
        
        (b) Consider now the set $\Tilde{\Lambda}=\left\{z\in\mathbb{D}:\rho(z,\Lambda)\leq 1/ 2 \right\}$ and the family $\mathcal{F}$ of dyadic Whitney squares $Q$ such that $Q\cap\Tilde{\Lambda}\neq\emptyset$. Consider the set $E=\{z\in\mathbb{D}:\rho(z,\Lambda)< e^{-H(z)}\}$. Observe that there is a universal constant $C_1 >0$ such that $H_\Lambda (z) > C_1 N(Q)$ for any $z \in Q$ and any dyadic Whitney square $Q$. Hence $H(z) \geq C_0 C_1 N(Q)$ for any $z \in Q$ and if the constant $C_0 $ is taken sufficiently large, we deduce that $\overline{Q} \setminus E \neq \emptyset$.  For each $Q\in\mathcal{F}$, pick $a(Q)\in\overline{Q}\backslash E$ such that  
        \begin{equation*}
            \log|B(a(Q))|^{-1}=\max\{\log|B(z)|^{-1}:z\in\overline{Q}\backslash E\}.
        \end{equation*}
        
        Consider the sequence $A=\{a(Q):Q\in\mathcal{F}\}$ and pick a positive number $\delta$ 
such that for any $z\in \DD$, the hyperbolic disc $D(z,\delta)$ intersects no more
than four distinct dyadic Whitney squares.
        Since $A$ is the union of at most four $\delta$-separated sequences, Corollary \ref{corolari1} gives that $A$ is a Blaschke sequence. Moreover, if $B_A$ denotes the Blaschke product with zero set $A$, there exists a constant $C_2=C_2 (\delta)>0$ such that   
        \begin{equation}
            \label{eq4}
            \log|B_A (z)|^{-1}\leq C_2 H_{\Lambda} (z)+4\log\rho(z,A)^{-1},\hspace{0.5cm}z\in\mathbb{D}.
        \end{equation}
        
        Observe that since $\delta>0$ is a universal constant, so is $C_2$. Fix $\lambda_k\in\Lambda$. Let $Q_k$ be the Whitney square containing $\lambda_k$, and $a_k\in A$ such that $\rho(\lambda_k,a_k)=\rho(\lambda_k,A)$. Hence, either $a_k\in Q_k$ or $a_k$ belongs to a Whitney square $Q$ with $\overline{Q}\cap\overline{Q_k}\neq\emptyset$. Let $H\in$ Har$_+($\D$)$, $H\geq C_0 H_\Lambda$. Harnack's inequality gives that there exists a universal constant $C_3>0$ such that $C_3 H(\lambda_k)\geq H(a_k)$. Since $\rho(a_k,\lambda_k)\geq e^{-H(a_k)}$, we deduce that $\rho(a_k,\lambda_k)\geq e^{-C_3H(\lambda_k)}$. Using (\ref{eq4}) we deduce that
        \begin{equation*}
            \log|B_A(\lambda_k)|^{-1}\leq C_2 H_\Lambda(\lambda_k)+4 C_3 H(\lambda_k).
        \end{equation*}
        By assumption, $H\geq C_0 H_\Lambda$ and we deduce that
        
        \begin{equation*}
            \log|B_A(\lambda_k)|^{-1}\leq (C_2 C_0^{-1} + 4 C_3 )H(\lambda_k),
        \end{equation*}
that is, the function $f=B_A$ satisfies estimate \eqref{parta} with the constant $C=C_2 C_0^{-1} + 4C_3$. By assumption, there exist $g,h\in\mathcal{N}$ such that $Bg+B_Ah=1$. We deduce that there exists $H_1\in$ Har$_+($\D$)$ such that $-\log|B(a_k)|\leq H_1(a_k)$, $k=1, 2, \ldots$. Hence, $-\log|B|$ has a harmonic majorant in $\Omega=\bigcup_{Q\in\mathcal{F}} (Q\backslash E)$. Since, by Lemma \ref{lema1}, $-\log|B|$ has a harmonic majorant on $\mathbb{D}\backslash(\bigcup_{Q\in\mathcal{F}} Q)$, the proof is complete.
    
\end{proof1}

The proof of part (a) can be easily adapted to show the following more general fact:
\begin{corollary}
\label{corona}
Let $B$ be a Blaschke product with zero set $\Lambda = \{\lambda_k \}$. Then there exists a universal constant $C>0$ such that the following statement holds. Let $H\in$ Har$_+($\D$)$ and assume that the function $-\log|B|$ has a harmonic majorant on the set $\{z\in\mathbb{D}:\rho(z,\Lambda)\geq e^{-H(z)}\}$. Then for any $f_1 , \ldots , f_n \in H^\infty$, $||f_i||_\infty\leq1$, $i=1, \ldots, n$, such that
            \begin{equation*}
             \sum_{i=1}^n |f_i (\lambda_k)|> e^{-CH(\lambda_k)}, \quad k=1,2, \ldots , 
            \end{equation*}
        there exist $g_1, \ldots , g_n,h\in\mathcal{N}$ such that $\sum_i f_ig_i=1+Bh$. 
\end{corollary}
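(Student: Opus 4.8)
The plan is to mirror the proof of part (a) of Theorem \ref{teorema1}, the only genuinely new ingredient being a pigeonhole step. Write $S(z):=|B(z)|+\sum_{i=1}^n|f_i(z)|$. First I would show that, under the hypotheses, $-\log S$ has a majorant in $\mbox{Har}_+(\mathbb{D})$; then I would apply the Corona Theorem for the Nevanlinna class to the $(n+1)$-tuple $B,f_1,\dots,f_n\in\mathcal{N}$ (note $B,f_i\in H^\infty\subset\mathcal{N}$), which produces $h_0,g_1,\dots,g_n\in\mathcal{N}$ with $Bh_0+\sum_{i=1}^n f_ig_i=1$; setting $h=-h_0\in\mathcal{N}$ gives exactly $\sum_{i=1}^n f_ig_i=1+Bh$.

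To build the harmonic majorant of $-\log S$ I would split $\mathbb{D}$ as in the proof of Theorem \ref{teorema1}(a). By Lemma \ref{lema1} with $\delta=1/2$ there is a universal $C_1>0$ with $-\log|B(z)|\le C_1 H_\Lambda(z)$ whenever $\rho(z,\Lambda)\ge 1/2$, and since $S(z)\ge|B(z)|$ this disposes of those points. So fix $z$ with $\rho(z,\Lambda)<1/2$ and let $\lambda_k\in\Lambda$ realize $\rho(z,\Lambda)=\rho(z,\lambda_k)$. If $\rho(z,\lambda_k)\ge e^{-H(z)}$, then $z$ lies in the set on which $-\log|B|$ is assumed to have a harmonic majorant $H_B$, so $-\log S(z)\le-\log|B(z)|\le H_B(z)$. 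If $\rho(z,\lambda_k)<e^{-H(z)}$, then since $\sum_i|f_i(\lambda_k)|>e^{-CH(\lambda_k)}$ there is an index $i_0=i_0(z)$ with $|f_{i_0}(\lambda_k)|>\tfrac1n e^{-CH(\lambda_k)}$; applying the Schwarz lemma to $f_{i_0}$ and running the computation of part (a) with $f_{i_0}$ and the bound $|f_{i_0}(\lambda_k)|>\tfrac1n e^{-CH(\lambda_k)}$ in place of $f$ and $|f(\lambda_k)|>e^{-CH(\lambda_k)}$ gives
\[
|f_{i_0}(z)|\ \ge\ \frac{\tfrac1n e^{-CH(\lambda_k)}-e^{-H(z)}}{1-\tfrac1n e^{-CH(\lambda_k)}e^{-H(z)}}.
\]
Since $\rho(z,\lambda_k)<1/2$, Harnack's inequality makes $H(z)$ and $H(\lambda_k)$ comparable, so — exactly as in part (a), and for the same small universal constant $C$ — the right-hand side is at least $\tfrac1{2n}e^{-CH(z)}$. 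Hence $S(z)\ge|f_{i_0}(z)|\ge\tfrac1{2n}e^{-CH(z)}$ and $-\log S(z)\le CH(z)+\log(2n)$.

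Combining the three cases, $-\log S\le C_1H_\Lambda+H_B+CH+\log(2n)$ on all of $\mathbb{D}$; the right-hand side is a finite sum of positive harmonic functions plus a constant, hence an element of $\mbox{Har}_+(\mathbb{D})$, and the Corona Theorem for the Nevanlinna class concludes the argument. I do not expect a serious obstacle here: the proof of Theorem \ref{teorema1}(a) goes through essentially verbatim. The one point to check is that the universal constant $C$ is not degraded by the number of generators: the passage from $\sum_i|f_i(\lambda_k)|>e^{-CH(\lambda_k)}$ to a lower bound for a single $|f_{i_0}(z)|$ costs a factor $1/n$, but since $n$ is a fixed integer this only shifts the harmonic majorant by the additive constant $\log(2n)$ and leaves $C$ (which comes solely from the Schwarz and Harnack estimates) equal to the constant of Theorem \ref{teorema1}(a).
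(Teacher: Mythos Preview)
Your proposal is correct and follows exactly the approach the paper indicates: the paper's entire proof of the corollary is the remark that ``the proof of part (a) can be easily adapted,'' and you have carried out that adaptation faithfully, with the pigeonhole step (selecting an index $i_0$ with $|f_{i_0}(\lambda_k)|>\tfrac1n e^{-CH(\lambda_k)}$) as the only new ingredient and the application of the Nevanlinna Corona theorem to the $(n+1)$-tuple $B,f_1,\dots,f_n$ in place of the pair $B,f$. Your observation that the factor $1/n$ only contributes an additive constant $\log(2n)$ to the harmonic majorant, leaving the universal constant $C$ (which depends only on the Schwarz--Harnack estimate) unchanged, is exactly the point.
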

On the other hand, the $n$-tuple analogue of part (b) trivially holds, because the hypothesis for $n\ge 1$ implies the hypothesis for $n=1$ which is the one used in Theorem \ref{teorema1}.

\vskip.3cm
The proof of Theorem \ref{teorema2} uses the following auxiliary result.

\begin{lemma}
\label{lema3}
Let $\{Q_j\}$ be an infinite sequence of different dyadic Whitney squares and let $\{M_j\}$ be a sequence of positive numbers with $\lim_{j \to \infty} M_j=\infty$. Then there exists $H\in$ Har$_+($\D$)$ 
and a constant $C_0 >0$ 
such that $H_j=\sup\{H(z):z\in Q_j\}$ satisfies $H_j\leq M_j+C_0$ for any $j=1, 2, \ldots$, 
and $\limsup_{j \to \infty} H_j=\infty$.
\end{lemma}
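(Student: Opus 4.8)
The plan is to construct $H$ as a sum of Poisson integrals supported on the boundary arcs $I_j$ associated to the Whitney squares $Q_j$, with carefully chosen weights, and then to verify the two required properties. Specifically, write $I_j$ for the boundary arc $I_{k,\ell}$ when $Q_j = Q_{k,\ell}$, so that $|I_j| \asymp 1 - |z(Q_j)| =: \ell_j$ and the Poisson integral $P_{I_j}(z) := \int_{I_j} \frac{1-|z|^2}{|\xi - z|^2}\,|\mathrm{d}\xi|$ is a positive harmonic function which is $\asymp 1$ on $Q_j$ and, more importantly, which is bounded above by a universal constant on $Q_j$ itself. The first attempt would be $H := \sum_j c_j\, P_{I_j}$ for suitable coefficients $c_j \ge 0$; the value of $H$ on $Q_j$ is then roughly $c_j$ plus the contributions $\sum_{i \ne j} c_i P_{I_i}(z)$ coming from the other arcs. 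The subtlety is that these cross terms must not destroy the upper bound $H_j \le M_j + C_0$.

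The key difficulty, and the step I expect to be the main obstacle, is precisely controlling those cross terms: if many arcs $I_i$ with large coefficients $c_i$ happen to lie near $Q_j$, the sum $\sum_{i\ne j} c_i P_{I_i}(z)$ on $Q_j$ could greatly exceed $M_j$. To handle this I would not take $c_j$ proportional to $M_j$ directly; instead I would first pass to a subsequence. Since $M_j \to \infty$, choose indices $j_1 < j_2 < \cdots$ along which $M_{j_n} \to \infty$ and such that the selected squares $Q_{j_n}$ are "sparse" — one can always arrange, by discarding terms, that $\sum_{n} \ell_{j_n} < \infty$ as fast as we like, and even that the arcs $I_{j_n}$ are pairwise disjoint and that for $m \ne n$ the Harnack-type factor $P_{I_{j_n}}$ evaluated on $Q_{j_m}$ is at most $\ell_{j_n}/\mathrm{dist}(\cdot)$, a small quantity summable over $n$. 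Concretely, arrange that $\sum_{n \ne m} \sup_{z \in Q_{j_m}} P_{I_{j_n}}(z) \le 1$ for every $m$ (possible because each such supremum is comparable to $|I_{j_n}|/|\xi_{j_n} - \xi_{j_m}|$ for the centers of the arcs, and a rapidly decreasing subsequence makes the double sum finite with the diagonal removed). Then set $c_{j_n} := M_{j_n}$ for the chosen indices, $c_j := 0$ otherwise, and $H := 1 + \sum_n M_{j_n} P_{I_{j_n}}$ — the additive $1$ guarantees $H \in \mathrm{Har}_+(\DD)$ is strictly positive and that $H$ is not identically zero even if all but finitely many $c_j$ vanish.

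With this construction the verification is routine. For an index $j = j_m$ in the subsequence: $H_{j_m} = \sup_{Q_{j_m}} H \le 1 + M_{j_m}\sup_{Q_{j_m}} P_{I_{j_m}} + \sum_{n \ne m} M_{j_n}\sup_{Q_{j_m}} P_{I_{j_n}}$. Here $\sup_{Q_{j_m}} P_{I_{j_m}} \le C_1$ for a universal $C_1$ (the Poisson integral over the shadow arc is uniformly bounded on the corresponding Whitney square), while the last sum is controlled: since we arranged the $M_{j_n}$ to grow and the geometric decay to dominate, we can in fact demand at the selection stage that $M_{j_n}\sup_{Q_{j_m}} P_{I_{j_n}}$ be summable in $n$ with sum $\le 1$ for each $m$ (choose the subsequence so that consecutive selected squares are separated by enough dyadic generations relative to the growth of $M$). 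Hence $H_{j_m} \le M_{j_m} + C_1 + 2 =: M_{j_m} + C_0$. For an index $j$ not in the subsequence one still has $Q_j$ possibly close to finitely many selected arcs, but the same geometric estimate gives $H_j \le C_0' $ for a universal $C_0'$, which is certainly $\le M_j + C_0$ after enlarging $C_0$ (or: simply note $M_j \ge 0$ and bound by the universal constant). Finally $\limsup_j H_j = +\infty$ because $H_{j_m} \ge M_{j_m} P_{I_{j_m}}(z(Q_{j_m})) \ge c\, M_{j_m} \to \infty$ along the subsequence, using the universal lower bound $P_{I_{j_m}} \ge c > 0$ at the center of $Q_{j_m}$. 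This completes the proof. $\square$
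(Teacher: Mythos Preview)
Your overall strategy --- building $H$ as a weighted sum of Poisson integrals over the shadow arcs of a sparse subsequence of the $Q_j$ --- is exactly the paper's. The gap lies in your treatment of the indices $j$ \emph{not} in the chosen subsequence. You assert that for such $j$ ``the same geometric estimate gives $H_j \le C_0'$ for a universal $C_0'$'', but this is not justified and is in general false under your construction. If a non-selected square $Q_j$ lies in the Carleson box over a selected arc $I_{j_n}$ (for instance, if $l(Q_j) \le l(Q_{j_n})$ and the radial projection of $Q_j$ meets $I_{j_n}$), then $P_{I_{j_n}} \gtrsim 1$ on $Q_j$, so $H_j \gtrsim M_{j_n}$. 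Your sparsity conditions only compare selected squares to one another, and nothing prevents $M_{j_n}$ from vastly exceeding $M_j$: since $M_j \to \infty$ is not assumed monotone, the defect $H_j - M_j$ can be unbounded along non-selected indices. The same issue already threatens your estimate for selected squares when $n<m$: if $Q_{j_m}$ happens to sit below $I_{j_n}$ then $M_{j_n}\sup_{Q_{j_m}} P_{I_{j_n}}$ is of order $M_{j_n}$, and the condition ``sum $\le 1$'' cannot be met just by making later squares small.

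The paper closes this gap by an inductive construction. Rather than taking $c_{j_n}=M_{j_n}$, it fixes the coefficients a priori as $\mu_m=2^{m/2}$, and at step $k+1$ chooses $j_{k+1}$ only after first picking a scale $R$ such that \emph{every} $Q_j$ with $l(Q_j)\le R$ satisfies $M_j \ge H^{(k)}_j + \mu_{k+1}$ (possible because the partial sum $H^{(k)}$ is bounded and $M_j\to\infty$). Then $Q_{j_{k+1}}$ is taken with side length so small that its contribution on all squares of side $\ge R$ is at most $2^{-(k+1)/2}$, while on all squares of side $\le R$ the full contribution $\mu_{k+1}\cdot h_{Q_{j_{k+1}}}\le \mu_{k+1}$ is absorbed by the room just guaranteed. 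This two-sided check, performed at each step for \emph{all} indices $j$ simultaneously, is precisely what your one-shot selection with weights $M_{j_n}$ is missing.
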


\begin{proof}

For any Whitney cube $Q$, let us set
\begin{equation}
\label{hq}
h_Q(z):= \int_{I(Q)} \frac{1-|z|^2}{|e^{i\theta}-z|^2} \frac{d\theta}{2\pi},
\end{equation}
where $I(Q)$ 
is the radial projection of $Q$ onto $\partial \mathbb{D}$.

Note that there exists an absolute constant $c>0$ such that the function $h_Q$ verifies the following properties:
\begin{equation}
\label{prophq}
0\le h_Q (z) \le 1, \forall z \in \DD; \quad h_Q (z) \leq \frac{l(Q)}{c(1-|z|)};
\quad 0 < c \le  h_Q (z), \forall z \in Q.
\end{equation}

We will construct inductively a sequence of coefficients $\mu_m$ and an increasing sequence of integers
$(j_m)$
so that $H:=\sum_m \mu_m h_{Q_{j_m}}$
satisfies the conclusion of the Lemma.  There is no loss of generality in assuming
that $l(Q_{j+1})\le l(Q_{j})$ for all $j$.  Let $H_0:=0$. For any $k >0$, let us denote $H^{(k)}:=\sum_{m=1}^k \mu_m h_{Q_{j_m}}$.  Let $H^{(k)}_j:=\sup_{Q_j} H^{(k)}$. We want to prove by induction on $k$  that:
\begin{equation}
\label{recdom}
 H^{(k)}_j \le M_j + \sum_{m=1}^k 2^{-m/2}, j=1,2,\ldots . 
\end{equation}
In particular, this will show that $\sup_k H^{(k)}$ will be bounded on any square $Q_j$,
therefore $H:= \sum_{j=1}^\infty \mu_m h_{Q_{j_m}}$ will be well defined and will  satisfy $H_j\le M_j +C_0$,
for all $j \in \Z_+$.

For $k=0$, the property \eqref{recdom} is vacuously true.  Suppose it is verified for 
$k$. Since $H^{(k)}$ is bounded and $M_j \to \infty$,
there exists $R \in (0, l(Q_{j_k}) )$ such that for any $j \in \Z_+$ such that
$l(Q_j) \le R$, then $M_j - H^{(k)}_j \ge 2^{(k+1)/2}$. Now there exists $R'<R$ such that for all $z$ such that $|z| \le 1-R/2$ (in particular,
for $z\in Q_j$ with $j\le j_k$) and for all $Q$ such that $l(Q) \le R'$, 
$h_Q(z) < 2^{-k-1}$. Pick $j_{k+1}$ to be the smallest $j$ such that $l(Q_j) \le R'$, and $\mu_{k+1}:=2^{(k+1)/2}$.
Then for all $j$ such that $l(Q_j)\ge R$, by the induction hypothesis,
\[
H^{(k+1)}_j = H^{(k)}_j +  \mu_{k+1} h_{Q_{j_{k+1}}} 
\le H^{(k)}_j + 2^{(k+1)/2} 2^{-k-1} \le M_j +  \sum_{m=1}^{k+1} 2^{-m/2}.
\]
On the other hand, for all $j$ such that $l(Q_j)\le R$, 
\[
H^{(k+1)}_j = H^{(k)}_j +  \mu_{k+1} h_{Q_{j_{k+1}}} \le H^{(k)}_j +  \mu_{k+1}
= H^{(k)}_j + 2^{(k+1)/2} \le M_j,
\]
by the choice of $R$. The inductive condition \eqref{recdom} is verified. Finally, notice that for $z \in Q_j$ so that $j=j_m$ for some $m$, 
\[
H(z) \ge \mu_m h_{Q_{j_m}} \ge c 2^{m/2} \to \infty \mbox{ as } m\to \infty.
\]
\end{proof}

\begin{proof2}
(a) For each dyadic Whitney square $Q$ let $N(Q)=\#(Q\cap\Lambda)$ and let $\mathcal{U}(Q)$ be the collection of at most nine dyadic Whitney squares $Q_1$ such that $\overline{Q_1}\cap\overline{Q}\neq\emptyset$. Observe that there exists an absolute constant $\delta>0$ such that
     \begin{equation*}
         \delta\;\leq\;\rho(Q,\mathbb{D}\;\backslash\;\mathcal{U}(Q))
         := \inf \{\rho (z, w): z \in Q , w \in \mathbb{D}\;\backslash\;\mathcal{U}(Q) \}
     \end{equation*}
for any dyadic Whitney square $Q$. 
Consider also $M(Q)=\#(\mathcal{U}(Q)\cap\Lambda)$. Let $\{Q_j\}$ be the collection of dyadic Whitney squares such that $M(Q_j)>0$. The Blaschke condition gives that
     \begin{equation*}
         \mathlarger{\sum}M(Q_j)l(Q_j)<\infty.
     \end{equation*}
Then there exists a sequence $\{\Tilde{M_j}\}$, $\Tilde{M_j} \geq M(Q_j)$ for any $j \geq 1$, with 
     \begin{equation*}
         \lim_{j\to\infty} \Tilde{M_j} \big/ M(Q_j)=+\infty \text {   and   } \sum\Tilde{M_j} l(Q_j)<\infty.
     \end{equation*}
Lemma \ref{lema3} provides $H\in$ Har$_+($\D$)$ such that $H(z) \leq \Tilde{M_j}\big/ M(Q_j) +C_0$ for any $z\in Q_j$ and $\limsup_{j\to\infty}\sup\{H(z):z\in Q_j\}=+\infty$. Since the sequence $\Lambda$ is contained in $\cup Q_j $, Harnack's inequality gives that $\limsup_{k\to\infty} H(\lambda_k)=+\infty$. We will now show that the function $-\log|B|$ has a harmonic majorant on the set $\{z\in\mathbb{D}:\rho(z,\Lambda)\geq e^{-H(z)}\}$. Since $\rho(\Lambda,\mathbb{D}\backslash \cup Q_j)\geq\delta>0$, Lemma \ref{lema1} gives that $-\log|B|$ has a harmonic majorant on $\mathbb{D}\backslash \cup Q_j$. Now fix $z\in Q_j$ with $\rho(z,\Lambda)\geq e^{-H(z)}$ and split $\Lambda=\Lambda_1\cup\Lambda_2$, where $\Lambda_1=\{\lambda_k:\rho(\lambda_k,z)\leq\delta\}$ and $\Lambda_2=\{\lambda_k:\rho(\lambda_k,z)>\delta\}$. By Lemma \ref{lema1}, there exists a constant $C=C(\delta)>0$ such that

\begin{equation*}
    \mathlarger{\sum}_{\lambda_k\in\Lambda_2}\log\rho(\lambda_k,z)^{-1}\leq CH_\Lambda(z).
\end{equation*}
On the other hand, since $\rho(\lambda_k,z)\geq e^{-H(z)}$, we have

\begin{equation*}
    \mathlarger{\sum}_{\lambda_k\in\Lambda_1} \log\rho(\lambda_k,z)^{-1}\leq H(z)M(Q_j) \leq \Tilde{M_j} + C_0 M(Q_j) \leq (1+ C_0) \Tilde{M_j} .
\end{equation*}

Consider the harmonic function $H_1:=\sum_j\Tilde{M_j} h_{Q_j}$, where $h_Q$ is as in \eqref{hq}.
By the last estimate in \eqref{prophq}, 
$H_1(z)\geq c\Tilde{M_j}$ for any $z\in Q_j$. We deduce that 
\begin{equation*}
    \log|B(z)|^{-1}\leq CH_\Lambda(z)+ c^{-1} (1+ C_0) H_1(z),
\end{equation*}
and this finishes the proof of part (a).

To prove part (b), let $C$ be as in Theorem \ref{teorema1} (a), and 
$\tilde H$ a function as in part (a). If we set $H:=C\tilde H$,
applying Theorem \ref{teorema1} (a) yields our result.

\end{proof2}

\vspace{0.5cm}
\textbf{A Family of Blaschke products}

We now present a family of Blaschke products $B$ for which the family $\mathcal{H}(B)$ can be easily described. Let $\Lambda=\{\lambda_k\}$ be a separated sequence in $\mathbb{D}$, that is, assume  $\eta=\inf\{\rho(\lambda_k,\lambda_j):k\neq j\}>0$. Let $N =\{N_j\}$ be a sequence of positive integers tending to infinity such that
\begin{equation*}
    \sum N_j(1-|\lambda_j|)<\infty.
\end{equation*}
Consider the Blaschke product $B(\Lambda, N)$ defined as
\begin{equation}
\label{def}
    B(\Lambda,N)(z)=\prod_j\left(\frac{\overline{\lambda_j}}{|\lambda_j|}\frac{\lambda_j-z}{1-\overline{\lambda_j}z}\right)^{N_j},\hspace{0.5cm}z\in\mathbb{D}.
\end{equation}
Consider the pairwise disjoint pseudohyperbolic disks $D_j=\{z\in\mathbb{D}:\rho(z,\lambda_j)\leq \eta / 4\}$, $j=1, 2, \ldots$. By Lemma \ref{lema1}, $-\log|B(\Lambda, N)|$ has a harmonic majorant on $\mathbb{D}\backslash\cup_j D_j$. Again, by Lemma \ref{lema1}, there exists a constant $C>0$ and a function $H_1 \in $ Har$_+(\mathbb{D})$ such that
\begin{equation*}
    \mathlarger{\sum}_{k\neq j}\log\rho(z,a_k)^{-N_k}\leq CH_1 (z),\hspace{0.5cm}z\in D_j,\hspace{5pt}j=1, 2, \ldots.
\end{equation*}
Fix $H\in$ Har$_+(\mathbb{D})$. Then, $-\log|B(\Lambda, N)|$ has a harmonic majorant on $\{z\in\mathbb{D}:\rho(z,\Lambda)\geq e^{-H(z)}\}$ if and only if there exists $H_1\in$ Har$_+(\mathbb{D})$ such that
\begin{equation*}
    N_j\log\rho(z,a_j)^{-1}\leq H_1(a_j),\hspace{0.5cm}j=1, 2, \ldots, 
\end{equation*}
whenever $\rho(z,a_j)\geq e^{-H(z)}$. Hence, $-\log|B(\Lambda, N)|$ has a harmonic majorant on $\{z\in\mathbb{D}:\rho(z,\Lambda)\geq e^{-H(z)}\}$ if and only if the mapping $a_j\rightarrow N_jH(a_j)$ has a harmonic majorant. Hence for the Blaschke products $B= B(\Lambda , N)$ we have $H \in \mathcal{H} (B)$ if and only $H$ satisfies the sufficient condition given in Theorem \ref{teorema4}.

\section{Theorems 3,4 and 5}
We start with the proof of part (a) of Theorem \ref{teorema5}. 
\begin{proof}[Proof of Theorem \ref{teorema5} (a)]
Let $a_j\in\mathbb{D}$ with 
\begin{equation*}
    \lim_{j\to\infty}\frac{H_1(a_j)}{H_2(a_j)} = \infty.
\end{equation*}
Considering a subsequence if necessary, we can assume that $\rho(a_j,a_k)\geq 1/2$ if $k\neq j$. By Harnack's inequality $H_1 (a_j) (1 - |a_j|) \leq 2 H_1 (0)$ for any $j$. Hence $\lim_{j \to \infty} (1-|a_j|) H_2 (a_j) =0$. Pick a sequence $\{N_j\}$ of positive integers such that $\lim_{j \to \infty} N_j(1-|a_j|)H_2(a_j)=0$ and $\lim_{j \to \infty} N_j(1-|a_j|)H_1(a_j)=+\infty$. Considering again a subsequence of $\{a_j\}$ if necessary, we can assume that
\begin{equation}
\label{sumN}
    \sum N_j(1-|a_j|)H_2(a_j)<\infty.
\end{equation}
Now let $B$ be the Blaschke product defined by
\begin{equation*}
    B(z)=\mathlarger{\prod_j} \frac{\overline{a_j}}{|a_j|}\left(\frac{a_j-z}{1-\overline{a_j}z}\right)^{N_j},\hspace{0.5cm} z\in\mathbb{D}.
\end{equation*}

As discussed at the end of the previous section, for any $H\in$ Har$_+(\mathbb{D})$, the function $-\log|B|$ has a harmonic majorant on the set $\{z\in\mathbb{D}:\rho(z,\{a_j\})\geq e^{-H(z)}\}$ if and only if the mapping $F(H)$ defined by  $F(H)(a_j)=N_jH(a_j)$, $j\geq 1$, and $F(H)(z)=0$ if $z \notin \{a_j \}$, has a harmonic majorant. Since $\lim_{j \to \infty} N_j H_1(a_j)(1-|a_j|)=+\infty$, the mapping $F(H_1)$ can not have a harmonic majorant. Consider the function 
\begin{equation*}
    H_3(z)= \mathlarger{\sum_j} N_j H_2(a_j)h_{Q_j},
\end{equation*}
where 
$Q_j$ is the dyadic Whitney square containing $a_j$. Here $h_Q$ is the function defined in \eqref{hq}. Since $l(Q_j)$ is comparable to $ 1-|a_j|$, the above
sum converges by \eqref{sumN}. 
Observe that last estimate of \eqref{prophq} gives that  there exists an absolute constant $C_1>0$ such that
\begin{equation*}
    H_3(a_j)\geq C_1N_jH_2(a_j),\hspace{0.5cm} j=1, 2, \ldots
\end{equation*}
Hence $F(H_2)$ has a harmonic majorant.
\end{proof}

\begin{proof}[Proof of Theorem \ref{teorema5} (b)]
By Harnack's inequality, there is a constant $\gamma\in (0,1)$ such that for any dyadic 
Whitney square $Q$, any positive harmonic function $H$, any $z,z'\in Q$, we have 
$\gamma H(z') \le H(z) \le \gamma^{-1} H(z')$.  Pick $\eta \in (0,\eta_0)$.

Given an unbounded positive harmonic function  $H$, we can choose a 
sequence of dyadic Whitney squares $\{Q_j\}$ such that 
\begin{enumerate}
\item
$l(Q_{j+1}) < l(Q_{j})$, $j\in \Z_+$, and
\item 
If $z_j$ denotes the center of $Q_j$, 
\[
H(z_{j}) \to \infty \mbox{ and } H(z_j) \le \frac{\gamma}{2(1+\eta)} \log \frac1{l(Q_j)}.
\]
\end{enumerate}
To prove that we can satisfy the second condition, in the case where 
$$
\max_{z:|z|=1-l(Q_j)} H(z) \le 
 \frac{\gamma}{2(1+\eta)} \log \frac1{l(Q_j)}, 
 $$
it is enough to choose $Q_j$ to be the dyadic Whitney  square where the maximum is attained; otherwise, we know that the average value of $H$ on the circle $\{z \in \mathbb{C} : |z|=1-l(Q_j)\}$ is $H(0)$,
 so for $j$ large enough we can find a dyadic square $Q_j$ where 
\begin{equation}
\label{gamma}
\frac{\gamma^3}{2(1+\eta)}  \log \frac1{l(Q_j)} \le H(z_j) \le  \frac{\gamma^2}{2(1+\eta)}\log \frac1{l(Q_j)} . 
\end{equation}
Observe that since $H$ is unbounded we have $\lim_{j \to \infty} H(z_j) = \infty$. 
We shall need to take subsequences of $\{Q_j \}$, while keeping the same name for the sequence. Choose a sequence $R_j\to0$ such that \begin{equation}
\label{defR}
\lim_{j\to\infty} \frac{\log R_j^{-1}}{H(z_j)} =0 .
\end{equation}
Observe that $\lim_{j\to\infty} l(Q_j)^2 H(z_j) \log \frac1{R_j} =0$. Indeed, for $j$ large enough, we have
\[
0 < l(Q_j)^2 H(z_j) \log \frac1{R_j}  \le
 l(Q_j)^2 H(z_j)^2 \le \left( l(Q_j) \frac{\gamma^2}{2(1+\eta)} \log \frac1{l(Q_j)} \right)^2.
 \]
Now, with $[\cdot]$ denoting the integer part of a real number, define the sequence
of integers
\begin{equation}
\label{defN}
N_j:= \left[ 
\frac1{l(Q_j) \left(H(z_j) \log \frac1{R_j}\right)^{1/2}}
\right].
\end{equation}

For $z_0\in\mathbb{D}$ and $t>0$, let $D_\rho(z_0,t)=\{z\in\mathbb{D}:\rho(z,z_0)\leq t\}$ denote the pseudohyperbolic disk of radius $t$ centered at $z_0$. We define the sequence $\Lambda$ as the union of finite sequences $\Lambda^{(k)} \subset Q_k$.
For each $k$, $\Lambda^{(k)}$ is the union of 
\begin{enumerate}
\item
the point $z_k$ with multiplicity $N_k$ and
\item
a maximal subset of points $\lambda_j = \lambda_j (k)$ contained in the pseudohyperbolic disc $D_\rho (z_k, R_k)$
such that for any $i\neq j$, $ \rho(\lambda_i,\lambda_j) \ge e^{-(1+\eta)H(z_k)}$.  
\end{enumerate}
Note that we are adding to the multiple zero $z_k$ a set of points $\lambda_j$ with a cardinality on 
the order of $R_k^2 e^{2(1+\eta)H(z_k)}$,  which tends to infinity by \eqref{defR}. We proceed to take a subsequence of $\Lambda$ (still denoted by the same letter)
that will make it, among other things, a Blaschke sequence.

First observe that 
\begin{equation}
\label{vanishlog}
\lim_{j\to\infty} l(Q_j) N_j \log \frac1{R_j} 
= \lim_{j\to\infty}  \left( \frac{\log \frac1{R_j}}{H(z_j)} \right)^{1/2} =0,
\end{equation}
and 
\begin{equation}
\label{blowupH}
\lim_{j\to\infty} l(Q_j) N_j H(z_j) = \lim_{j\to\infty}  \left( \frac{H(z_j)}{ \log \frac1{R_j} } \right)^{1/2}
= \infty.
\end{equation}
On the other hand, applying the second inequality in \eqref{gamma} and \eqref{defR}, one gets 
\begin{equation}
\label{vanishexp}
 \lim_{j\to\infty}  \left(  l(Q_j)  e^{2(1+\eta)H(z_j)} R_j^2 \log \frac1{R_j} \right)=0.
\end{equation}
We now complete the definition of $\Lambda$ by restricting the indexes $k$ such that $\rho(Q_k , Q_j) \geq 1/2$ if $k \neq j$ and taking a subsequence so that 
\begin{equation}
\label{summable}
\sum_{j=1}^\infty l(Q_j) \left(N_j \log \frac1{R_j}+    e^{2(1+\eta)H(z_j)} R_j^2 \log \frac1{R_j} \right) < \infty,
\end{equation}
which is possible by \eqref{vanishlog} and \eqref{vanishexp}. Let $\Lambda= \cup \Lambda^{(k)}$ be the resulting sequence. 

Observe that an immediate consequence of this is that $\Lambda$ is now a Blaschke sequence, since
\[
\sum_{j=1}^\infty l(Q_j) \left(N_j +    e^{2(1+\eta)H(z_j)} R_j^2 \right) < \infty.
\]

{\bf Claim 1.}
For $k$ large enough, 
\[
\{ \zeta \in Q_k: \rho(\zeta, \Lambda) \ge e^{-H(\zeta)} \} \cap D_\rho (z_k,R_k)
=\emptyset.
\]
\begin{proof}
For any $\zeta \in D(z_k,R_k)$, there is a $\lambda_j$ such that 
$\rho(\zeta,\lambda_j)<  e^{-(1+\eta)H(z_k)}$.
Since $\lim_{k \to \infty} R_k =0$, by Harnack's inequality, there is a number $\gamma_k <1$ with $\lim_{k\to \infty} \gamma_k=1$, such that $\gamma_k H(z_k) \le H(z) \le \gamma_k^{-1} H(z_k)$  for any $z\in D(z_k,R_k)$. Then, for $k$ large enough,
\[
\log \rho(\zeta,\lambda_j) < -(1+\eta)H(z_k) 
\le - \gamma_k^{-1} H(z_k) \le - H(\zeta).
\]
\end{proof}
We henceforth restrict attention to the tail of the sequence where the conclusion of Claim 1 holds.
\vskip.3cm

{\bf Claim 2.}
$H \in \mathcal H(B)$.
\begin{proof}
Since $\rho(Q_k , Q_j ) \geq 1/2$ if $k \neq j$, it is enough to majorize, on each
$Q_k$, the part of the product corresponding to the local zeros, that is, to find $H_1 \in $ Har$_+(\mathbb{D})$ such that 
\[
\sum_{\lambda \in \Lambda \cap Q_k} \log\frac1{\rho(\zeta,\lambda)} \leq H_1 (z_k), \quad  \zeta \in Q_k, k =1,2,\ldots, 
\]
if $\rho (\zeta, \Lambda) \geq e^{- H (\zeta)}$. By the previous Claim this only occurs when $\zeta \notin D_\rho (z_k , R_k)$.
Then the above sum breaks into two terms: those corresponding
to $\lambda = z_k$ can be estimated by  $- N_k \log R_k$, and those admit a harmonic majorant 
because by \eqref{summable}, 
$$\sum_{j=1}^\infty l(Q_j) N_j \log \frac1{R_j} <\infty . $$  
The second term corresponds to the points $\lambda = \lambda_j \in \Lambda^{(k)} \setminus \{z_k \}$. After applying an automorphism of the disc mapping $\lambda_k$ to $0$, the corresponding sum 
$$
\sum_{\lambda_ j \in \Lambda^{(k)} \setminus \{z_k \} } \log\frac1{\rho(\zeta,\lambda)}
$$
reduces to a Riemann sum for the area integral of $\log \frac1{|z|}$, with
disks of (Euclidean) radius $e^{-(1+\eta)H(z_k)}$. The integral is convergent,
and after an elementary computation, one finds that the second term is bounded by a fixed multiple of  
$e^{2(1+\eta)H(z_k)} R_k^2 \log \frac1{R_k} $. Again by \eqref{summable}, this term also admits a harmonic majorant. 
\end{proof}

We now want to show that $-\log |B|$
has no harmonic majorant on the set $\{z: \rho(z,\Lambda)> e^{-(1+\eta_0)H(z)}\}$.

{\bf Claim 3.} For $k$ large enough, 
\[
\{ \zeta \in Q_k: \rho(\zeta, \Lambda) \ge e^{-(1+\eta_0 )H(\zeta)} \} \cap D(z_k,e^{-(1+\eta)H(z_k)})
\neq \emptyset.
\]
\begin{proof}
The choice of the points $\{\lambda_j \}$ gives that there is a point $\zeta$ such that $\rho(z_k,\zeta)= e^{-(1+\eta)H(z_k)}$ and for any $\lambda_j \in \Lambda^{(k)} \setminus \{z_k \}$, we have 
$\rho(\zeta,\lambda_j)\ge \frac12 e^{-(1+\eta)H(z_k)}$. 
Therefore, since $\eta < \eta_0$,  for $k$ large enough,
\begin{multline*}
\log \rho(\zeta, \Lambda) \ge -\log 2 -(1+\eta)H(z_k)
\ge -\log 2  -\gamma_k^{-1}(1+\eta)H(\zeta) 
\\
\ge -(1+\eta_0)H(\zeta).
\end{multline*}
\end{proof}

Let $\zeta$ be a point in the non empty intersection given by Claim 3. Since $B$ has a zero at $z_k$ of multiplicity $N_k$,  , 
\[
\log \frac1{|B(\zeta)|} \ge N_k (1+\eta )H(z_k)
\]
and \eqref{blowupH} implies that this cannot admit a harmonic majorant,
because any majorizing function would have to grow faster than $1/l(Q_k)$ 
at the points $z_k$.
\end{proof}
\vskip.5cm

The proof of Theorem \ref{teorema3} uses the following variant of Lemma 1.1 of \cite{MNT}. 

\begin{lemma}
\label{lema4}
There exists a universal constant $C_0\geq1$ such that the following statement holds. Let $\Lambda$ be a Blaschke sequence and $H\in$ Har$_+(\mathbb{D})$. Let $z\in\mathbb{D}$ with $e^{H(z)}\geq\max\{C_0,\#\{\lambda\in\Lambda:\rho(\lambda,z)\leq\frac{1}{2}\}\}$. Then there exists $\Tilde{z}\in\mathbb{D}$ with $\rho(\Tilde{z},\Lambda)\geq e^{ - H (\Tilde{z} )}$ and $\rho(\Tilde{z},z)\leq e^{-H(z)/C_0}$.
\end{lemma}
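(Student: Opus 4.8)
\textbf{Proof proposal for Lemma \ref{lema4}.}

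The plan is a volume (or equivalently pigeonhole) argument: I will look for $\tilde z$ inside a small pseudohyperbolic disc around $z$ and show that the ``forbidden'' region, i.e.\ the set of points that are too close to $\Lambda$, cannot cover it. Write $N:=\#\{\lambda\in\Lambda:\rho(\lambda,z)\le 1/2\}$, so the hypothesis says $e^{H(z)}\ge\max\{C_0,N\}$, in particular $H(z)\ge\log C_0$. Set
\[
r:=\min\{e^{-H(z)/C_0},\,\tfrac19\},
\]
and note two things: $r\le e^{-H(z)/C_0}$, so any $\tilde z$ with $\rho(\tilde z,z)\le r$ automatically meets the required closeness; and $r\le 1/9$, so Harnack's inequality gives $H(\tilde z)\ge\frac{1-1/9}{1+1/9}H(z)=\frac45H(z)$ for every $\tilde z\in D_\rho(z,r)$, hence $e^{-H(\tilde z)}\le e^{-\frac45H(z)}=:s$ on that disc.

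First I would dispose of the far points. If $\lambda\in\Lambda$ with $\rho(\lambda,z)>1/2$ and $\tilde z\in D_\rho(z,r)$, then by subadditivity of $\rho$, $\rho(\tilde z,\lambda)\ge\rho(z,\lambda)-\rho(z,\tilde z)>\frac12-\frac19>\frac13$, whereas $e^{-H(\tilde z)}\le e^{-\frac45H(z)}<\frac13$ once $H(z)$ exceeds a universal constant (which is guaranteed by $H(z)\ge\log C_0$ once $C_0$ is fixed large enough). So the far points never obstruct $\rho(\tilde z,\lambda)\ge e^{-H(\tilde z)}$, and only the $N$ local points matter. Consequently the relevant bad set is
\[
\mathcal B:=\{\tilde z\in D_\rho(z,r):\rho(\tilde z,\lambda)<e^{-H(\tilde z)}\text{ for some }\lambda\in\Lambda,\ \rho(\lambda,z)\le\tfrac12\}\subset\bigcup_{\lambda:\rho(\lambda,z)\le1/2}D_\rho(\lambda,s),
\]
the inclusion holding because $e^{-H(\tilde z)}\le s$ on $D_\rho(z,r)$.

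Next comes the counting. Using that the hyperbolic area measure $dA(w)/(1-|w|^2)^2$ is Möbius invariant, a pseudohyperbolic disc of radius $t\le 1/2$ has hyperbolic area $\pi t^2/(1-t^2)\le 2\pi t^2$; hence $\mathrm{area}(\mathcal B)\le 2\pi N s^2\le 2\pi e^{H(z)}e^{-\frac85H(z)}=2\pi e^{-\frac35H(z)}$, while $\mathrm{area}(D_\rho(z,r))\ge\pi r^2=\pi\min\{e^{-2H(z)/C_0},\tfrac1{81}\}$. Choosing $C_0$ with $2/C_0<3/5$ (any $C_0>10/3$ will do, and in particular a suitably large universal $C_0$), both $e^{-2H(z)/C_0}$ and the constant $1/81$ dominate $e^{-\frac35H(z)}$ as soon as $H(z)$ is larger than an absolute constant; taking $C_0$ large enough that $H(z)\ge\log C_0$ forces that regime yields $\mathrm{area}(\mathcal B)<\mathrm{area}(D_\rho(z,r))$. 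Therefore there exists $\tilde z\in D_\rho(z,r)\setminus\mathcal B$; combining with the treatment of far points, $\rho(\tilde z,\Lambda)\ge e^{-H(\tilde z)}$, while $\rho(\tilde z,z)\le r\le e^{-H(z)/C_0}$, which is the conclusion. (One can equally well phrase the last step by fixing a maximal $2s$-separated subset of $D_\rho(z,r)$, whose cardinality is $\gtrsim(r/s)^2>N$, and noting each $D_\rho(\lambda,s)$ can kill at most one of its points.)

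The one subtlety deserving care is that the neighbourhood radius $e^{-H(z)/C_0}$ promised in the conclusion is \emph{not} uniformly small: when $C_0$ is large and $H(z)$ only slightly exceeds $\log C_0$, $e^{-H(z)/C_0}$ is close to $1$, and then Harnack and the triangle inequality would give nothing on all of $D_\rho(z,e^{-H(z)/C_0})$. The remedy, used above, is simply to search inside the smaller disc $D_\rho(z,r)$ with $r=\min\{e^{-H(z)/C_0},1/9\}$ — shrinking the target region only makes the closeness conclusion easier. After that the proof is pure bookkeeping of exponents, the key inequality being $2/C_0<3/5$, which makes the area of the small disc outweigh the combined area of the $N\le e^{H(z)}$ forbidden discs.
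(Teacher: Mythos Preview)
Your argument is correct and follows essentially the same route as the paper: a pigeonhole/area comparison inside a small pseudohyperbolic disc around $z$. The paper phrases it as packing more than $e^{3H(z)/2}$ pairwise disjoint pseudohyperbolic discs of radius $e^{-H(z_j)}$ inside $D_\rho(z,e^{-H(z)/C_0})$ and observing that the $N\le e^{H(z)}$ local points of $\Lambda$ cannot meet all of them; you compute hyperbolic areas directly (and mention the equivalent packing formulation parenthetically). Your write-up is in fact more careful on two points that the paper leaves implicit: the capping of the search radius at $1/9$ so that Harnack applies uniformly, and the explicit disposal of the far points $\rho(\lambda,z)>1/2$. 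One terminological quibble: the pseudohyperbolic distance is not a metric, so ``subadditivity'' is not quite the right word; the inequality $\rho(\tilde z,\lambda)\ge\rho(z,\lambda)-\rho(z,\tilde z)$ you use does hold, but it comes from the strong triangle inequality $\rho(a,b)\ge\frac{|\rho(a,c)-\rho(b,c)|}{1-\rho(a,c)\rho(b,c)}$ together with the fact that the denominator is at most $1$.
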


\begin{proof}
We can assume that $H(z)\geq100$. A calculation shows that there exists a constant $C_1>1$ such that
\begin{equation*}
    C_1^{-1}t^2(1-|z|)^2\leq \text{ Area }D_\rho(z,t)\leq C_1t^2(1-|z|)^2.
\end{equation*}
Using these estimates and the fact that $H(z)\geq100$, one can show that there exists a sufficiently large universal constant $C_0>0$ such that the pseudohyperbolic disk $D_\rho(z,e^{-H(z)/C_0})$ contains more than $e^{3H(z)/2}$ pairwise disjoint pseudohyperbolic disks $D_j$ of pseudohyperbolic radius $e^{-H(z_j)}$. Here $z_j$ denotes the center of $D_j$. Since $e^{H(z)}\geq\#(\Lambda\cap D_\rho(z,\frac{1}{2}))$, there exists at least one $D_j$ with $D_j\cap\Lambda=\emptyset$ and we can take as $\Tilde{z}$ the center of $D_j$.
\end{proof}

\begin{proof3}
Let $C_0\geq1$ be the constant appearing in Lemma \ref{lema4}. We can assume $C>1$. We will show that there exists a constant $C_1=C_1 (C)>0$ such that, for any $z\in\mathbb{D}$ with $C_0^{-1}\geq\rho(z,\Lambda)\geq e^{-CH(z)}$ there exists $\Tilde{z}\in\mathbb{D}$ with $\rho(\Tilde{z},\Lambda)\geq e^{-H(\Tilde{z})}$ and 
\begin{equation}
    \label{eq5}
    \log|B(z)|^{-1}\leq C_1 (\log|B(\Tilde{z})|^{-1}+H_\Lambda(\Tilde{z})).
\end{equation}

Fix $z\in\mathbb{D}$ with $C_0^{-1}\geq\rho(z,\Lambda)\geq e^{-CH(z)}$.  Apply Lemma \ref{lema4} to find $\Tilde{z}\in\mathbb{D}$ with $\rho(\Tilde{z},z)\leq e^{-H(z)/C_0}$ such that $\rho(\Tilde{z},\Lambda)\geq e^{-H(\Tilde{z})}$. Let $\Lambda = \{\lambda_k\}$ and split $\Lambda=\Lambda_1\cup\Lambda_2\cup\Lambda_3$ where $\Lambda_1=\{\lambda_k:\rho(z,\lambda_k)\leq e^{-H(z)/2C_0}\}$, $\Lambda_2=\{\lambda_k: e^{-H(z)/2C_0}<\rho(z,\lambda_k)\leq 1/2 \}$ and $\Lambda_3=\{\lambda_k:\rho(z,\lambda_k ) \geq 1/2 \}$. By lemma \ref{lema1}, there exists a constant $C_2>0$ such that
\begin{equation*}
    \mathlarger{\sum}_{\lambda_k\in\Lambda_3}\log\rho(z,\lambda_k)^{-1}\leq C_2 H_\Lambda(z).
\end{equation*}
If $\lambda_k\in\Lambda_2$, we have $\rho(\Tilde{z},\lambda_k)\leq\rho(z,\Tilde{z})+\rho(z,\lambda_k)\leq 2\rho(z,\lambda_k)$. Using the obvious estimate $2x\leq x^{1/2}$, which holds for $0\leq x\leq 1/2$, we deduce $\rho(\Tilde{z},\lambda_k)\leq\rho(z,\lambda_k)^{1/2}$. Hence,
\begin{equation*}
    \mathlarger{\sum}_{\lambda_k\in\Lambda_2}\log\rho(z,\lambda_k)^{-1}\leq 2\mathlarger{\sum}_{\lambda_k\in\Lambda_2}\log\rho(\Tilde{z},\lambda_k)^{-1}\leq 2\log|B(\Tilde{z})|^{-1}.
\end{equation*}
Finally, since $\rho(z,\Lambda)\geq e^{-CH(z)}$ we have that 
\begin{equation*}
    \mathlarger{\sum}_{\lambda_k\in\Lambda_1}\log\rho(z,\lambda_k)^{-1}\leq CH(z)\#\Lambda_1.
\end{equation*}

Observe that if $\lambda_k\in\Lambda_1$, then $\rho(\Tilde{z},\lambda_k)\leq\rho(z,\Tilde{z})+\rho(z,\lambda_k)\leq2e^{-H(z)/2C_0}$ and we deduce that there exists a universal constant $C_3 >0$ such that
\begin{equation*}
    \log|B(\Tilde{z})|^{-1}\geq\mathlarger{\sum}_{\lambda_k\in\Lambda_1}\log\rho(\Tilde{z},\lambda_k)^{-1}\geq C_3 \frac{H(z)}{C_0}\#\Lambda_1.
\end{equation*}
Hence, there exists a constant $C_4>0$ such that
\begin{equation*}
    \mathlarger{\sum}_{\lambda_k\in\Lambda_1}\log\rho(z,\lambda_k)^{-1}\leq C_4\log|B(\Tilde{z})|^{-1}.
\end{equation*}
Collecting these estimates one finds a constant $C_5 >0$ such that 
\begin{equation*}
    \log|B(z)|^{-1}\leq C_5 (\log|B(\Tilde{z})|^{-1}+H_\Lambda (z)).
\end{equation*}
Since by Harnack's inequality $H_\Lambda (z)$ and $H_\Lambda (\Tilde{z})$ are comparable, this proves (\ref{eq5}). Now (\ref{eq5}), the assumption and another application of Harnack's inequality,
 give that $-\log|B|$ has a harmonic majorant on the set $\{z\in\mathbb{D}: C_0^{-1} \geq\rho(z,\Lambda)\geq e^{-CH(z)}\}$. By Lemma \ref{lema1}, there exists a constant $C_6>0$ such that $-\log|B(z)|\leq C_6 H_\Lambda(z)$ if $\rho(z,\Lambda)\geq C_0^{-1}$. This completes the proof.
\end{proof3}
\vskip.3cm

\begin{proof4}
Fix $z\in\mathbb{D}$ with $\rho(z,\Lambda)\geq e^{-H(z)}$. Consider $\Lambda_1=\{\lambda_k:\rho(\lambda_k,z)\leq 1/2 \}$ and $\Lambda_2=\{\lambda_k:\rho(\lambda_k,z)> 1/2 \}$. By Lemma \ref{lema1}, there exists an absolute constant $C_1>0$ such that
\begin{equation*}
    \mathlarger{\sum}_{\lambda_k\in\Lambda_2}\log\rho(\lambda_k,z)^{-1}\leq C_1H_\Lambda(z).
\end{equation*}
On the other hand, since $\rho(z,\Lambda)\geq e^{-H(z)}$, we have
\begin{equation*}
    \mathlarger{\sum}_{\lambda_k\in\Lambda_1}\log\rho(\lambda_k,z)^{-1}\leq H(z)\#\Lambda_1.
\end{equation*}
Let $Q$ be the dyadic Whitney square containing $z$. Since there exists a universal constant $0<C_2<1$ such that each point $\lambda_k\in\Lambda_1$ satisfies $\rho(\lambda_k,z(Q))\leq C_2$, we deduce that there exists a constant $C_3>0$ such that $H(z)\#\Lambda_1 \leq C_3H_1(z)$. Hence, $C_1H_\Lambda+C_3H_1$ is a harmonic majorant of $-\log|B|$ on the set $\{z\in\mathbb{D}:\rho(z,\Lambda)\geq e^{-H(z)}\}$.
\end{proof4}
\vskip.5cm

\begin{corollary}
Let $B$ be a Blaschke product with zero set $\Lambda$. Let $H\in$ Har$_+(\mathbb{D})$ such that
\begin{equation*}
    \mathlarger{\sum}N(Q)H(z(Q))l(Q)<\infty,
\end{equation*}
where the sum is taken over all dyadic Whitney squares Q such that $N(Q)>0$. Then, $-\log|B|$ has a harmonic majorant on the set $\{z\in\mathbb{D}:\rho(z,\Lambda)\geq e^{-H(z)}\}$.
\end{corollary}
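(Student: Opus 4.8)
The plan is to deduce this directly from Theorem \ref{teorema4} by producing an explicit positive harmonic function $H_1$ witnessing its hypothesis. The natural candidate is assembled from the Poisson integrals $h_Q$ introduced in \eqref{hq}: with $c>0$ the absolute constant of \eqref{prophq} and $\mathcal{A}$ the family of dyadic Whitney squares $Q$ with $N(Q)>0$, set
\begin{equation*}
H_1 := c^{-1} \sum_{Q\in\mathcal{A}} N(Q)\, H(z(Q))\, h_Q .
\end{equation*}

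First I would check that $H_1$ is well defined and belongs to Har$_+(\mathbb{D})$. Fix $z\in\mathbb{D}$; the middle estimate in \eqref{prophq} gives $h_Q(z)\le l(Q)/(c(1-|z|))$, hence
\begin{equation*}
\sum_{Q\in\mathcal{A}} N(Q)\, H(z(Q))\, h_Q(z) \;\le\; \frac{1}{c(1-|z|)} \sum_{Q\in\mathcal{A}} N(Q)\, H(z(Q))\, l(Q) \;<\; \infty
\end{equation*}
by the summability hypothesis. So the series converges at every point of $\mathbb{D}$, and being a series of nonnegative harmonic functions, it converges locally uniformly (by Harnack's inequality) to a positive harmonic function.

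Next I would verify the hypothesis of Theorem \ref{teorema4}. For $Q_0\in\mathcal{A}$ the center $z(Q_0)$ lies in $Q_0$, so the last estimate in \eqref{prophq} yields $h_{Q_0}(z(Q_0))\ge c$; retaining only the term $Q=Q_0$ in the defining series,
\begin{equation*}
H_1(z(Q_0)) \;\ge\; c^{-1}\, N(Q_0)\, H(z(Q_0))\, h_{Q_0}(z(Q_0)) \;\ge\; N(Q_0)\, H(z(Q_0)).
\end{equation*}
Thus $N(Q)H(z(Q))\le H_1(z(Q))$ for every $Q\in\mathcal{A}$, and Theorem \ref{teorema4} applies, giving that $-\log|B|$ has a harmonic majorant on $\{z\in\mathbb{D}:\rho(z,\Lambda)\ge e^{-H(z)}\}$.

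There is essentially no serious obstacle here: the statement is a straightforward corollary once one recognizes that the summability condition is precisely what is needed to make the series for $H_1$ converge, via the decay estimate $h_Q(z)\le l(Q)/(c(1-|z|))$; the slight care required is only in invoking Harnack's theorem to conclude that the limit is harmonic rather than merely subharmonic. Everything else is a direct substitution into Theorem \ref{teorema4}.
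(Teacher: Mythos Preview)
Your proof is correct and follows essentially the same route as the paper's: define $H_1$ as a weighted sum of the Poisson kernels $h_Q$ over $Q\in\mathcal{A}$, use the lower bound in \eqref{prophq} to obtain $H_1(z(Q))\ge c\,N(Q)H(z(Q))$, and apply Theorem~\ref{teorema4}. You are simply more explicit than the paper in normalizing by $c^{-1}$ and in justifying convergence of the series via the decay estimate $h_Q(z)\le l(Q)/(c(1-|z|))$.
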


\begin{proofc5}
 Consider the harmonic function $H_1\in$ Har$_+(\mathbb{D})$ defined by 
 \begin{equation*}
     H_1(z)=\sum N(Q)H(z(Q)) h_Q, 
     \quad z \in \mathbb{D} , 
 \end{equation*}
where the sum is taken over all dyadic Whitney squares $Q$ with $N(Q)>0$. 
Observe that by \eqref{prophq}, there exists a positive constant $C>0$ such that $H_1(z(Q))\geq C N(Q)H(z(Q))$ for any $Q$ with $N(Q)>0$. Now the result follows from Theorem \ref{teorema4}.
\end{proofc5}

\section{Smirnov quotient algebras}

A quasi-bounded harmonic function is a harmonic function in the unit disc which is the Poisson integral of an integrable function in the unit circle.  We denote by $QB_+(\mathbb{D})$ the cone of positive quasibounded harmonic functions in $\mathbb{D}$.  An analytic function $f$ in $\mathbb{D}$ is in the Smirnov class $\mathcal{N}^+$ if the function 
$\log^+|f|$ has a quasi-bounded harmonic majorant in $\mathbb{D}$. 
A function in the Nevanlinna class is in the  Smirnov class 
if and only if its canonical inner-outer  factorization has no singular function in the denominator. Hence the Smirnov class $\mathcal{N}^+$ is
an algebra where the invertible  functions are exactly the outer functions. Interpolating sequences in $\mathcal{N}^+$ were described as those sequences $\{z_n \}$ of points in $\mathbb{D}$ for which  there exists $H \in QB_+(\mathbb{D})$ such that condition \eqref{NIS} holds. See Theorem 1.3 of \cite{HMNT}. Mortini's proved in  \cite[Satz 4]{M} the following Corona type Theorem in the Smirnov class. Given $f_1,\ldots,f_n\in \mathcal{N}^+$, the B\'ezout equation $f_1g_1+\cdots+f_ng_n\equiv1$ can be solved with functions $g_1,\ldots,g_n\in \mathcal{N}^+$ if and only if there exists $H\in QB_+(\mathbb{D})$ such that 
$$
\sum_{i=1}^n |f_i (z)| \geq e^{-H(z)}, \quad z \in \mathbb{D}.
$$

Given an inner function $I$ with zero set $\Lambda = \{\lambda_k \}$  we want to study invertibility in the quotient algebra $\mathcal{N}^+/I\mathcal{N}^+$. Let   $f\in\mathcal{N}^+$ and assume that 
the class $[f]$ is invertible in $\mathcal{N}^+ \big/ I\mathcal{N}^+$, that is, there exist $g,h \in \mathcal{N}^+ $ such that $fg=1 + Ih$. Then there exists $H\in QB_+(\mathbb{D})$ such that 
\begin{equation}
    |f(\lambda_k)|\geq e^{-H(\lambda_k)}, \hspace{0.5cm} k=1,2,\ldots.
    \label{eqnova}
\end{equation}

We are interested on studying the converse statement. Observe that if $I$ had a non constant singular inner factor, then for any $h \in \mathcal{N}^+$ there would exist $\xi \in \partial \mathbb{D}$ such that $I(z)h(z)$ would tend to zero when $z$ approaches $\xi$ non-tangentially. Hence if $I$ has a non constant singular inner factor, we can not expect that condition \eqref{eqnova} implies that $[f]$ is invertible in $\mathcal{N}^+ \big/ I\mathcal{N}^+$. When $I$ is a Blaschke product, we have the following analogue of Theorem 1. 

\begin{theorem}
\label{teorema1smirnov}
Let $B$ be a Blaschke product with zero set $\Lambda=\{\lambda_k\}$.
    \begin{enumerate}[(a)]
        \item There exists a universal constant $C>0$ such that the following statement holds. Let $H\in QB_+ (\mathbb{D})$ and assume that the function $-\log|B|$ has a quasibounded harmonic majorant on the set $\{z\in\mathbb{D}:\rho(z,\Lambda)\geq e^{-H(z)}\}$. Then for any $f\in H^\infty$, $||f||_\infty\leq1$ such that
            \begin{equation*}
             |f(\lambda_k)|> e^{-CH(\lambda_k)}, \quad k=1,2, \ldots , 
            \end{equation*}
        there exist $g,h\in\mathcal{N}^+$ such that $fg=1+Bh$.
        \item There exist universal constants  $C_0>0$ and $C >0$ such that the following statement holds. Let $H\in QB_+ (\mathbb{D})$ with $H\geq C_0 H_\Lambda$. Assume that for any $f\in H^\infty$, $||f||_\infty\leq1$ such that
            \begin{equation*}
\label{partb1}                
 |f(\lambda_k)|> e^{-CH(\lambda_k)}, \quad k=1,2, \ldots , 
            \end{equation*}
        there exist $g,h\in\mathcal{N}^+$ such that $fg=1+Bh$. Then, the function $-\log|B|$ has a quasibounded harmonic majorant on the set $\{z\in\mathbb{D}\, : \rho(z,\Lambda)\geq e^{-H(z)}\}$.
    \end{enumerate}
\end{theorem}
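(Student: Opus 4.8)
The plan is to follow the proof of Theorem \ref{teorema1} essentially word for word, replacing throughout "positive harmonic function'' and "harmonic majorant'' by "positive quasibounded harmonic function'' and "quasibounded harmonic majorant'', and invoking Mortini's Corona theorem in $\mathcal{N}^+$ (\cite[Satz 4]{M}) in place of the Corona theorem in $\mathcal{N}$. Two observations make this substitution legitimate. First, the auxiliary function $H_\Lambda$ of \eqref{funcio} is itself quasibounded: it is the Poisson integral of $\sum_k \chi_{I_k}$, and since $\sum_k |I_k|$ is comparable to $\sum_k (1-|\lambda_k|)<\infty$ by the Blaschke condition, we have $\sum_k \chi_{I_k}\in L^1(\partial\DD)$. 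Consequently every majorant produced through Lemma \ref{lema1}, Lemma \ref{lema2} or Corollary \ref{corolari1} (they are all multiples of $H_\Lambda$) is automatically quasibounded. Second, a positive harmonic function dominated by a quasibounded one is quasibounded, and finite sums of quasibounded functions are quasibounded; so it suffices to check at each step that the harmonic majorants being combined are quasibounded.

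For part (a), let $C$ be the constant of Theorem \ref{teorema1}(a) and let $f\in H^\infty$, $\|f\|_\infty\le 1$, satisfy $|f(\lambda_k)|>e^{-CH(\lambda_k)}$ for all $k$. We repeat verbatim the estimate in the proof of Theorem \ref{teorema1}(a): on $\{z:\rho(z,\Lambda)\le e^{-H(z)}\}$ the Schwarz lemma and Harnack's inequality give $|f(z)|\ge\tfrac12 e^{-CH(z)}$, so $-\log(|B(z)|+|f(z)|)\le CH(z)+\log 2$, which is quasibounded because $H\in QB_+(\DD)$; on $\{z:\rho(z,\Lambda)\ge e^{-H(z)}\}$ the hypothesis provides a quasibounded majorant of $-\log|B|\ge-\log(|B|+|f|)$; and adding to these a suitable multiple of $H_\Lambda$ (to cover, via Lemma \ref{lema1}, the region $\rho(z,\Lambda)\ge 1/2$, where $H$ may be small) produces $H_1\in QB_+(\DD)$ with $-\log(|B(z)|+|f(z)|)\le H_1(z)$ on all of $\DD$. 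Mortini's Corona theorem in $\mathcal{N}^+$ then yields $g_1,g_2\in\mathcal{N}^+$ with $fg_1+Bg_2=1$, i.e. $fg=1+Bh$ with $g=g_1$ and $h=-g_2\in\mathcal{N}^+$.

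For part (b), run the construction in the proof of Theorem \ref{teorema1}(b) unchanged: form $\widetilde\Lambda=\{z:\rho(z,\Lambda)\le 1/2\}$, the Whitney family $\mathcal{F}$ of squares meeting $\widetilde\Lambda$, the set $E=\{z:\rho(z,\Lambda)<e^{-H(z)}\}$, choose (using $H\ge C_0 H_\Lambda$ with $C_0$ universal and large, so that $\overline Q\setminus E\neq\emptyset$) a point $a(Q)\in\overline Q\setminus E$ maximizing $\log|B|^{-1}$ on $\overline Q\setminus E$, and let $B_A$ be the Blaschke product with zero set $A=\{a(Q):Q\in\mathcal{F}\}$. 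Exactly as in Theorem \ref{teorema1}(b), $f=B_A$ lies in $H^\infty$ with $\|B_A\|_\infty\le 1$ and $|B_A(\lambda_k)|>e^{-CH(\lambda_k)}$ for a universal constant $C=C_2C_0^{-1}+4C_3$ (enlarging $C$ slightly so that the non-strict bound obtained becomes strict, using $H(\lambda_k)>0$). By hypothesis there are $g,h\in\mathcal{N}^+$ with $B_A g=1+Bh$; evaluating at a zero $a_k\in A$ of $B_A$ gives $\log|B(a_k)|^{-1}=\log|h(a_k)|\le H_1(a_k)$, where $H_1\in QB_+(\DD)$ is a quasibounded harmonic majorant of $\log^+|h|$, which exists precisely because $h\in\mathcal{N}^+$. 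Since $a(Q)$ maximizes $\log|B|^{-1}$ on $\overline Q\setminus E$ and, by Harnack, $H_1$ is comparable at any two points of a Whitney square, we get $-\log|B(z)|\le CH_1(z)$ for $z\in\bigcup_{Q\in\mathcal{F}}(Q\setminus E)$; together with Lemma \ref{lema1} on $\DD\setminus\bigcup_{Q\in\mathcal{F}}Q$, where $\rho(z,\Lambda)\ge 1/2$ and hence $-\log|B|\le CH_\Lambda$ with $H_\Lambda$ quasibounded, this shows $-\log|B|$ has a quasibounded harmonic majorant on $\{z:\rho(z,\Lambda)\ge e^{-H(z)}\}$.

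There is no new analytic difficulty; the only thing to watch is the bookkeeping of quasiboundedness, and the main point to get right is the observation $H_\Lambda\in QB_+(\DD)$, which rests precisely on the Blaschke condition. Indeed, every harmonic majorant appearing in the argument is either $H$ itself, a multiple of $H_\Lambda$, or a majorant of $\log^+|h|$ for some $h\in\mathcal{N}^+$, and each of these is quasibounded in the Smirnov setting; the fact that Mortini's Corona theorem in $\mathcal{N}^+$ is stated with exactly the quasibounded hypothesis $\sum_i|f_i|\ge e^{-H}$, $H\in QB_+(\DD)$, is what makes part (a) close, and the characterization of membership in $\mathcal{N}^+$ by a quasibounded majorant of $\log^+|\cdot|$ is what makes part (b) close.
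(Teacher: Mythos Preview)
Your proposal is correct and matches the paper's intended approach: the paper gives no separate proof of Theorem \ref{teorema1smirnov}, so it implicitly expects exactly the adaptation of the proof of Theorem \ref{teorema1} that you carry out. Your explicit verification that $H_\Lambda\in QB_+(\DD)$ (as the Poisson integral of $\sum_k\chi_{I_k}\in L^1(\partial\DD)$), and your observation that in part (b) the majorant $H_1$ comes from $\log^+|h|$ with $h\in\mathcal{N}^+$ and is therefore quasibounded, are precisely the two points one must check when transferring the argument; your remark on passing from the non-strict to the strict inequality for $B_A$ by enlarging $C$ is also a fair tidy-up.
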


Hence, as in the case of the Nevanlinna class, the invertibility problem in $\mathcal{N}^+ / B \mathcal{N}^+$ reduces to study the set of functions $H\in QB_+ (\mathbb{D})$ such that $-\log|B|$ has a quasibounded harmonic  majorant on the set $\{z\in\mathbb{D}\,:\rho(z,\Lambda)\geq e^{-H(z)}\}$. So, given an inner function $I$ with zero set $\Lambda$, it is natural to consider the set $\mathcal{H}_{QB} (I)$ of functions $H\in QB_+ (\mathbb{D})$ such that $-\log|I|$ has a quasibounded harmonic  majorant on the set $\{z\in\mathbb{D}\,:\rho(z,\Lambda)\geq e^{-H(z)}\}$. Our next result says that if $I$ has a non constant singular inner factor, then  $\mathcal{H}_{QB} (I)$ does not contain large functions.

\begin{lemma}
Let $I$ be an inner function with zero set $\Lambda$ and non constant singular inner factor $S$. Then for any $H \in Har_+(\mathbb{D})$ with $H>H_\Lambda$, the function $-\log|I|$ has no quasibounded harmonic majorant on the set $\{z\in\mathbb{D}\,:\rho(z,\Lambda)\geq e^{-H(z)}\}$. 
\end{lemma}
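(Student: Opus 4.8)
The plan is to factor out the singular part of $I$, reduce the statement to one about the single harmonic function $-\log|S|$, and then use a maximum principle to push a hypothetical quasibounded majorant on $E:=\{z\in\mathbb{D}:\rho(z,\Lambda)\geq e^{-H(z)}\}$ out to all of $\mathbb{D}$, where it would immediately give a contradiction. First I would write the canonical factorization $I=c\,B\,S$, with $B$ the Blaschke product with zero set $\Lambda$, $c$ a unimodular constant, and $S$ the singular inner function attached to a positive singular measure $\mu$ on $\partial\mathbb{D}$; since $S$ is non-constant, $\mu\neq0$. Then $-\log|I(z)|=-\log|B(z)|+P[\mu](z)$ on $\mathbb{D}$, where $P[\mu]$ is the Poisson integral of $\mu$, and both summands are nonnegative; hence $-\log|I|\geq P[\mu]$ everywhere, so it suffices to show that $P[\mu]$ has no quasibounded harmonic majorant on $E$.

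Arguing by contradiction, suppose $v\in QB_+(\mathbb{D})$ satisfies $v\geq P[\mu]$ on $E$. The goal is to show that in fact $v\geq P[\mu]$ on all of $\mathbb{D}$. Granting this, the Herglotz measure of $v$ is absolutely continuous with respect to Lebesgue measure (that is exactly what $v\in QB_+(\mathbb{D})$ means), and the pointwise inequality $P[\mu]\leq v$ forces $\mu$ to be dominated by that absolutely continuous measure; being also singular, $\mu=0$, contradicting that $S$ is non-constant. To prove $v\geq P[\mu]$ on $\mathbb{D}$, set $w:=v-P[\mu]$, which is harmonic on $\mathbb{D}$ (since $\mu$ is a finite measure) and satisfies $w\geq0$ on $E$. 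The set $E$ is closed in $\mathbb{D}$ (as $z\mapsto\rho(z,\Lambda)$ is upper semicontinuous and $z\mapsto e^{-H(z)}$ is continuous), so $G:=\mathbb{D}\setminus E$ is open, and for any connected component $\Omega$ of $G$ one has $\partial\Omega\cap\mathbb{D}\subseteq E$. If every such $\Omega$ is relatively compact in $\mathbb{D}$, then $\overline{\Omega}$ is a compact subset of $\mathbb{D}$ with $w\geq0$ on $\partial\Omega$, and the minimum principle yields $w\geq0$ on $\Omega$; running over all components gives $w\geq0$ on $G$, hence on $\mathbb{D}$.

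Thus the heart of the proof — and the step I expect to be the main obstacle — is the geometric fact that, because $H>H_\Lambda$, every connected component of $G$ is relatively compact in $\mathbb{D}$. Here is the mechanism I would use. By Harnack's inequality (comparing $H(z)$ with $H(\lambda_k)$ when $z$ is pseudohyperbolically close to $\lambda_k$), together with $H\geq H_\Lambda$ and the inequality $H_\Lambda(z)\geq c_1 N(Q)$ for $z$ in a dyadic Whitney square $Q$ noted in the Introduction, one sees that $G$ is contained in the union of the pseudohyperbolic disks $D_\rho\!\big(\lambda_k, e^{-c_2 H(\lambda_k)}\big)$, each of hyperbolic radius $\lesssim e^{-c_3 H_\Lambda(\lambda_k)}$. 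If some component of $G$ accumulated on $\partial\mathbb{D}$, a path inside it escaping to a boundary point would have to be covered by these disks; but wherever the zeros of $B$ cluster, $H_\Lambda$ is large and the covering disks are correspondingly exponentially small, so covering a piece of the path of bounded hyperbolic length would require $\gtrsim e^{c_3 H_\Lambda(\lambda_k)}$ distinct zeros of $B$ inside a bounded hyperbolic ball about $\lambda_k$. This is impossible, because the number of zeros in such a ball is $\lesssim H_\Lambda(\lambda_k)$ — again by the inequality $H_\Lambda\gtrsim N(Q)$ — and $e^{c_3 t}$ is not $\lesssim t$ for large $t$. Hence no component of $G$ can reach $\partial\mathbb{D}$.

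The delicate point in the last paragraph, and the part that will require genuine care, is making this "clustering versus shrinking" dichotomy uniform along an arbitrary escaping path, which may well be tangential or spiralling rather than radial. I would handle this by discretizing the path into consecutive arcs of bounded hyperbolic length and then estimating, arc by arc, both the number of covering disks needed and the number of zeros available nearby, using that all the relevant zeros lie in a fixed hyperbolic neighbourhood of the arc (on which $H_\Lambda$, hence the disk radii, is comparable by Harnack). Everything else in the argument — the factorization, the reduction to $P[\mu]$, the maximum principle on relatively compact components, and the final absolute‑continuity contradiction — is routine once this geometric fact is in place.
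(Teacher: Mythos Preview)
Your reduction to the singular part $-\log|S|=P[\mu]$ and the endgame (a quasibounded majorant for $P[\mu]$ on all of $\mathbb D$ forces $\mu\ll dm$, contradicting singularity) are correct and coincide with the paper. The divergence is in how you push the inequality from $E$ to all of $\mathbb D$.

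The paper does this in two lines and bypasses your topological claim entirely. The observation you are not exploiting is that both $-\log|S|$ and the hypothetical majorant $H_1$ are \emph{positive harmonic}, so Harnack's inequality applies to each of them separately. Given any $z\in\mathbb D$, the hypothesis $H>H_\Lambda$ guarantees that $e^{H(z)}$ dominates $\#\{\lambda\in\Lambda:\rho(\lambda,z)\le\tfrac12\}$, so Lemma~1.1 of \cite{MNT} (the variant stated in this paper as Lemma~\ref{lema4}) produces $\tilde z\in E$ with $\rho(z,\tilde z)<e^{-H(z)/10}$. Then $-\log|S(\tilde z)|\le -\log|I(\tilde z)|\le H_1(\tilde z)$, and Harnack transported across this uniformly bounded pseudohyperbolic gap gives $-\log|S(z)|\le C\, H_1(z)$ with a universal $C$. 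That is the whole proof.

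Your route through the minimum principle trades this for the strictly stronger statement that every component of $G=\mathbb D\setminus E$ is relatively compact. Your covering sketch yields a contradiction only when $H_\Lambda$ is large along the hypothetical escaping path (so that $e^{c_3 H_\Lambda}\gg H_\Lambda$); the case where $H_\Lambda$ remains bounded along a chain of sparsely placed zeros --- precisely the ``delicate point'' you flag --- is not settled, and resolving it would amount to re-deriving the pigeonhole content of the nearby-point lemma in a more awkward form. Once that lemma is in hand, the Harnack shortcut above is immediate and the minimum-principle detour becomes unnecessary.
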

\begin{proof}
 We argue by contradiction. So assume that $H, H_1$ are quasibounded positive harmonic functions such that 
\begin{equation}
\label{final}
- \log |I(z)| \leq H_1(z) \, \text{ if } \,\rho(z,\Lambda)\geq e^{-H(z)} 
\end{equation}
For any $z \in \mathbb{D}$ we apply Lemma 1.1 of \cite{MNT} to get $\tilde z \in \mathbb{D}$ with $\rho (z, \tilde z) < e^{- H(z) / 10}$ and   $\rho ( \tilde z , \Lambda) > e^{- H(z)}$. Hence \eqref{final} gives 
$$
- \log |S(\tilde z)| \leq H_1(\tilde z) .
$$
By Harnack's inequality, there exists an absolute constant $C>0$ such that $-\log |S(z)| \leq C H_1 (z)$. Since $-\log |S|$ is the Poisson integral of a non trivial singular measure on the unit circle, this is a contradiction. 

\end{proof}

If $I$ has a non constant singular inner factor and finitely many or very sparse zeros, the set $\mathcal{H}_{QB} (I)$ is empty. On the other if $I$ satisfies the WEP property then $\mathcal{H}_{QB} (I)$ contains the constants. When $I$ is a Blaschke product, $\mathcal{H}_{QB} (I)$ is the whole cone of positive quasiharmonic functions if and only if the zeros of $I$ are a finite union of interpolating sequences in the Smirnov class. See \cite{MNT}. We have not explored the analogues of our Theorems 2-5 for the class $\mathcal{H}_{QB} (B)$.

 \end{document}